\newtheorem{theorem}{Theorem}[section]
\newtheorem*{theorem*}{Theorem}
\newtheorem{lemma}[theorem]{Lemma}
\newtheorem{corollary}[theorem]{Corollary}%[section]
\newtheorem{proposition}[theorem]{Proposition}%[section]
\theoremstyle{remark}
\newtheorem{example}[theorem]{Example}
\newtheorem{remark}[theorem]{Remark}%[section]
\theoremstyle{definition}
\newtheorem{definition}[theorem]{Definition}
\newtheorem{problem}[theorem]{Problem}
\newcommand{\dN}{\ensuremath{\mathbb{N}}}
\newcommand{\dR}{\ensuremath{\mathbb{R}}}
\newcommand{\R}{\dR}
\newcommand{\N}{\dN}
\newcommand{\PP}{{\mathcal P}}
\begin{document}

\title{On the Monge-Kantorovich problem with additional linear constraints}
\author{Danila~Zaev}
\address{Faculty of Mathematics, Higher School of Economics, Moscow,  Russia}
\email{zaev.da@gmail.com}
\thanks{
The author is partially supported by AG Laboratory HSE, RF government grant, ag.  11.G34.31.0023.
}
%\date{\today}
\keywords{Monge-Kantorovich problem, optimal transportation, Kantorovich duality, cyclical monotonicity, martingale, invariant measures}

\begin{abstract}
We consider the modified Monge-Kantorovich problem with additional restriction:
admissible transport plans must vanish on some fixed functional subspace. Different choice
of the subspace leads to different additional properties optimal plans need to satisfy.
Our main results are quite general and include several important examples. In particular, they include Monge-Kantorovich problems
in the classes of invariant measures and martingales. We formulate and prove a criterion for existence of a solution, a duality statement of the Kantorovich type, and
a necessary geometric condition on a support of optimal measure, which is analogues to the usual $c$-monotonicity.
\end{abstract}

\maketitle
%\tableofcontents

\section*{Introduction}

We are given probability measures $\mu_k$ on Polish spaces $X_k$, and choose a subspace $W$ of an appropriate functional space on $X_1\times\dots\times X_n$.
We consider the following optimization problem:
$$
\inf \left\{\int_{X_1\times\dots\times X_n} {c d\pi}: (\operatorname{Pr}_k)_\#\pi=\mu_k, \int \omega d\pi=0 ~ \forall \omega \in W \right\}
$$
for some cost function $c:X_1\times\dots\times X_n \rightarrow \R$.
This problem can be called the Monge-Kantorovich problem with additional linear constraints.
According to the usual terminology used in the Monge-Kantorovich theory, measures $\pi$ on $X_1\times\dots\times X_n$ with given marginals
$(\operatorname{Pr}_k)_\#\pi=\mu_k$ are called transport plans.
Here we restrict the set of transport plans adding the following requirement:
$$
\int \omega d\pi=0 ~ \forall \omega \in W
$$
Such constraint is obviously linear.

In the \textbf{first section} we give the precise formulation of the problem
and define a special class of functions $C_L$ containing $W$ in all of our examples.
Applying general machinery of the measure theory we prove a simple criterion of existence of a solution.
Namely, under an appropriate regularity assumption for the cost function
an optimal plan exists if and only if the set of admissible measures is not empty.

The motivation for this study arose from the applications in statistical physics and finance which lead to modified Monge-Kantorovich
problems, where sets of admissible transport plans are restricted in some way. The examples are the restriction of being invariant with respect to an
action of some group or to have a martingale property. Both examples can be seen as the particular cases of the general problem defined above.

We develop the general approach and establish some results, which can be seen as analogues of the appropriate statements 
for the classical Monge-Kantorovich
problem (for the theory of the classical problem see \cite{Bog-Kol}, \cite{Vill1}). 
One of such statements is the following Kantorovich-type duality result:
$$
\inf_{\pi\in \Pi_W}{\int c d\pi}=\sup_{\omega \in W}\left\{\sum_{k=1}^n \int_{X_k}{f_k(x_k) d\mu_k}\mbox{, } \sum_{k=1}^n f_k + \omega \leq c\right\}.
$$
The precise statement and a proof of this equality is presented in \textbf{section 2}.

The other result about the general problem is a geometric property of a support of optimal plans.
It is known that any solution of the standard  transportation problem  must be supported by a $c$-monotone set.
We formulate a similar property, which depends additionally on the space $W$ and call it $(c,W)$-monotonicity.
The necessity of such property is proven in \textbf{section 3} as a consequence of the duality statement.

One of the most interesting examples of linear restrictions are given by martingale measures which naturally appear
in financial applications. Any probabilistic model of a price of a time-dependent financial asset can be viewed as a multi-marginal transport plan.
It is known from the theory of financial markets that we need to restrict the set of possible distributions by the condition of ``fair game'':
at a particular time the expectation of the next value in the raw is equal to the present observed value.
This restriction is called ``martingale condition'', and it additionally restricts the set of available transport plans which can be used for
modeling the financial market. We refer to \cite{Hobson}, where the described price model is based on
a certain optimization on the set of martingale transport plans.
In \cite{Beigl3}, \cite{Beigl2} the theory of transportation problem with martingale restriction was developed and some deep results were obtained.
In \textbf{section 4} we show that the martingale restriction is actually a linear one and deduce some results about the martingale problem.

Another example of interesting linear restrictions is the invariance with respect
to a continuous action of some group. Such problems can naturally appear in the ergodic theory (see, for example, \cite{Lopes})
or geometry (\cite{Moameni}).
If the cost function is invariant, it is known that solutions of the classical Monge-Kantorovich plans are also invariant \cite{Moameni}.
In this case restriction of invariance does not add anything. In the other case it significantly modifies the problem.
In \textbf{section 5} we deduce from the general theory some new results about this problem and especially 
about the case of a compact group of invariance.

\section{Formulation of the problem}

Let $X_1,...,X_n$ be Polish spaces with Borel $\sigma$-algebra on them, $X=X_1\times\dots\times X_n$, $\mu_1,...,\mu_n$ are fixed
probability measures on $X_1,...,X_n$ respectively, $\mu=(\mu_1,...,\mu_n)$ is a tuple of such measures.
We denote by $\PP(X)$ the set of Borel probability measures over $X$, by $\Pi(\mu)$ the set of measures on $X$ with given marginals.
Both sets are equipped with the topology of weak convergence.

Let us introduce the functional spaces:
$$
C_L(\mu_i)=\left\{f\in L^1(X_i,\mu_i)\cap C(X_i)\right\}
$$
of continuous absolutely integrable functions with topology induced by $L^1(X_i,\mu_i)$ norm, and
$$
C_L(\mu)=\left\{h \in C(X): \exists f=\sum_{i=1}^n f_i(x_i)\in \bigoplus_{i=1}^n C_L(\mu_i)\mbox{ s.t. } |h|\leq f\right\}
$$
equipped with the seminorm:
$$
\| h \|_L:=\sup_{\pi \in \Pi}\int |h|d\pi
$$
It should be noted, that a very similar functional space was presented and studied in \cite{Vershik} by Vershik, Petrov, and Zatitskiy.

\begin{proposition}
 $\| h \|_L$ is a well-defined seminorm.
\end{proposition}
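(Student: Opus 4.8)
The plan is to establish two separate facts: first, that the supremum defining $\|h\|_L$ is finite for every $h\in C_L(\mu)$, so that the functional takes values in $[0,\infty)$; and second, that it satisfies the three seminorm axioms. I would begin by noting that the index set is nonempty, since the product measure $\mu_1\otimes\dots\otimes\mu_n$ belongs to $\Pi=\Pi(\mu)$; this guarantees that the supremum is taken over a nonempty family of nonnegative quantities.

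For finiteness, the key observation is the interaction between the dominating function and the marginal constraint. Given $h\in C_L(\mu)$, choose $f=\sum_{i=1}^n f_i(x_i)$ with $f_i\in C_L(\mu_i)$ and $|h|\le f$. For an arbitrary $\pi\in\Pi$ I would estimate
$$
\int_X |h|\,d\pi \le \int_X f\,d\pi = \sum_{i=1}^n \int_X f_i(x_i)\,d\pi = \sum_{i=1}^n \int_{X_i} f_i\,d\mu_i,
$$
where the last equality uses precisely that the $i$-th marginal of $\pi$ is $\mu_i$, so that $f_i$, depending only on $x_i$, integrates against $\pi$ exactly as it does against $\mu_i$ (by the pushforward formula). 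Since each $f_i\in L^1(X_i,\mu_i)$, the right-hand side is finite and, crucially, \emph{independent of} $\pi$. Taking the supremum over $\pi\in\Pi$ then yields $\|h\|_L\le\sum_{i=1}^n\int_{X_i}f_i\,d\mu_i<\infty$, which proves the functional is well-defined.

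The seminorm axioms follow more or less routinely. Nonnegativity is immediate because $\int|h|\,d\pi\ge0$ for each $\pi$. Absolute homogeneity follows by pulling the scalar out of the absolute value, $\int|\lambda h|\,d\pi=|\lambda|\int|h|\,d\pi$, and observing that the supremum is itself positively homogeneous. For the triangle inequality I would first verify that $C_L(\mu)$ is closed under addition: if $|h_1|\le\sum_i f_i^{(1)}$ and $|h_2|\le\sum_i f_i^{(2)}$, then $|h_1+h_2|\le\sum_i\bigl(f_i^{(1)}+f_i^{(2)}\bigr)$ with $f_i^{(1)}+f_i^{(2)}\in C_L(\mu_i)$, since each $C_L(\mu_i)$ is a vector space. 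Then, for each fixed $\pi$, the pointwise bound $|h_1+h_2|\le|h_1|+|h_2|$ integrates to $\int|h_1+h_2|\,d\pi\le\|h_1\|_L+\|h_2\|_L$, and taking the supremum over $\pi$ gives subadditivity.

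No step presents a genuine obstacle; the only point requiring care is the finiteness argument, which rests entirely on the fact that the marginal constraint converts the $\pi$-integral of the separable dominating function $f$ into a sum of fixed $\mu_i$-integrals, thereby producing a bound uniform in $\pi$. I would close by remarking why this is in general only a seminorm and not a norm: a continuous $h$ that vanishes on the support of $\mu_1\otimes\dots\otimes\mu_n$ (hence on the support of every admissible plan) has $\|h\|_L=0$ without being identically zero on $X$.
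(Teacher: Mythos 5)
Your proof is correct and follows essentially the same route as the paper's: finiteness via the bound $\int |h|\,d\pi \le \sum_{i}\int_{X_i} f_i\,d\mu_i$ (which is uniform in $\pi$ precisely because the marginal constraint reduces the $\pi$-integral of the separable dominant to fixed $\mu_i$-integrals), and subadditivity by integrating the pointwise inequality $|h+g|\le|h|+|g|$ and taking suprema. You simply spell out details the paper leaves implicit (nonemptiness of $\Pi$, the pushforward computation, closure of $C_L(\mu)$ under addition, and the closing remark on why it is only a seminorm), which is fine.
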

\begin{proof}
It is obviously finite
$$
\sup_{\pi \in \Pi}\int |h|d\pi \leq \sum \int f_i d\mu_i < \infty
$$
non-negative and absolutely scalable.
It is remain to check subadditivity: for any $h,g \in C_L(\mu)$
$$
\sup_{\pi \in \Pi}\int |h + g|d\pi \leq \sup_{\pi \in \Pi}\int |h|d\pi + \sup_{\pi \in \Pi}\int|g|d\pi=\| h \|_L+\| g \|_L
$$
\end{proof}

\begin{remark}
It can be seen straightforward that any function $f$ from $C_L(\mu)$ is absolutely integrable
with respect to any transport plan $\pi \in \Pi(\mu)$:
$$\int |f|d\pi \leq \sup_{\gamma \in \Pi}{\int |f|d\gamma}=\|f\|<\infty$$
\end{remark}

Denote by $F:=\bigoplus_{i=1}^n C_L(\mu_i) \subset C_L(\mu)$. Fix an arbitrary subspace $W \subset C_L(\mu)$ and a function $c \in C_L(\mu)$.
We are interested in the Monge-Kantorovich problem:
$$
\inf_{\pi \in \Pi_W}\left\{\int_{X} c(x_1,...,x_n)d\pi\right\}
$$
where the infimum is taken over the set of optimal plans with the property $\pi|_W=0$:
$$
\Pi_W=\left\{ \pi \in \PP(X): \forall w\in W \int w d\pi=0, \operatorname{Pr}_\#(\pi)=\mu \right\}
$$
$\operatorname{Pr}$ here is the natural projection from $X$ on the tuple of spaces $(X_1,...,X_n)$.

Suppose $W\ni w=f_i\circ \operatorname{Pr}_i$, where $f_i \in C_L(\mu_i)$. Then
$$
0 = \int_{X} (f_i\circ \operatorname{Pr}_i) d\pi = \int_{X_i} f_i d\mu_i
$$
Hence we have a necessary condition for measures $\mu \in \PP(X)$ to have a transport plan in $\Pi_W(\mu)$:
$$
\int_X f_i d\mu_i = 0 \mbox{ if } (f\circ \operatorname{Pr}_i) \in W
$$
In other words: $\mu_k|_{W\cap C_L(\mu_k)} = 0$ for any $k=1,...,n$.

Thus we can formulate the following central problem.
\begin{problem}
 \emph{(Kantorovich problem with linear constraints)}

 Given some fixed Polish spaces $X=X_1\times\dots\times X_n$, measures $\mu_i \in \PP(X_i)$, cost function $c\in C_L(\mu)$,
 and a linear subspace $W \subset C_L(\mu)$, where $F:=\bigoplus_{i=1}^n C_L(\mu_i) \subset C_L(\mu)$ find
 $$
 \inf_{\pi \in \Pi_W(\mu)}\left\{\int_{X} c(x)d\pi\right\}
 $$
\end{problem}

Now we prove the important fact about the space $C_L(\mu)$.
It turns out that the natural injection of $C_b(X)$ in $C_L(\mu)$ is dense:

\begin{lemma}
 $C_b(X)$ is dense in $C_L(\mu)$.
\end{lemma}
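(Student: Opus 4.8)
The plan is to approximate an arbitrary $h \in C_L(\mu)$ by its truncations, which are bounded and continuous, and to control the error through a uniform integrability estimate for the dominating function. Before starting, I would simplify the dominating function: by hypothesis there is $f = \sum_{i=1}^n f_i(x_i)$ with $f_i \in C_L(\mu_i)$ and $|h|\le f$. Replacing each $f_i$ by $|f_i|$ (still continuous and $\mu_i$-integrable, hence still in $C_L(\mu_i)$) only enlarges the sum, so I may assume without loss of generality that every $f_i \ge 0$ and still $|h| \le f := \sum_i f_i$.

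Next I would introduce the truncations $h_N := \max(-N, \min(N, h))$. Each $h_N$ is continuous and bounded by $N$, so $h_N \in C_b(X)$. Since $(|h| - N)^+ \le |h|\,\1_{\{|h| > N\}}$ and $|h| \le f$ forces $\{|h| > N\} \subseteq \{f > N\}$, I obtain the pointwise bound $|h - h_N| \le f\,\1_{\{f > N\}}$, and therefore
$$
\|h - h_N\|_L = \sup_{\pi \in \Pi} \int |h - h_N|\, d\pi \le \sup_{\pi \in \Pi} \int_{\{f > N\}} f\, d\pi .
$$
So the whole statement reduces to proving the uniform integrability limit $\lim_{N \to \infty} \sup_{\pi \in \Pi} \int_{\{f > N\}} f\, d\pi = 0$.

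The heart of the argument is this last limit, and the key observation is that any function of a single coordinate has the same $\pi$-integral for every $\pi \in \Pi$, namely its integral against the corresponding marginal. Writing $\int_{\{f > N\}} f \, d\pi = \sum_i \int_{\{f > N\}} f_i\, d\pi$, for a fixed threshold $K$ I would split each term according to whether $f_i > K$ or $f_i \le K$. On $\{f_i > K\}$ the integrand $f_i\,\1_{\{f_i > K\}}$ depends only on $x_i$, so its $\pi$-integral equals $\int_{\{f_i > K\}} f_i\, d\mu_i$, which is independent of $\pi$ and small for $K$ large by integrability of $f_i$. On $\{f_i \le K\}$ I bound $f_i \le K$ and use Markov's inequality $\pi(\{f > N\}) \le \tfrac 1N \int f\, d\pi = \tfrac 1N \sum_j \int f_j\, d\mu_j$, again a bound independent of $\pi$. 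Choosing first $K$ and then $N$ large makes both contributions uniformly small, which is exactly the desired limit.

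The only delicate point is that the supremum runs over all transport plans simultaneously, so an adversarial $\pi$ could in principle concentrate mass where $f$ is large; the mechanism that defeats this is precisely the marginal constraint, which pins down the single-coordinate integrals and, via Markov, the total mass of $\{f > N\}$ uniformly in $\pi$. Everything else is routine: the continuity and boundedness of the truncations, and the elementary pointwise inequalities above.
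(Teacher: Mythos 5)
Your proof is correct, and it shares the paper's overall strategy --- approximate $h$ by its truncations in $C_b(X)$ and control the error in $\|\cdot\|_L$ uniformly over $\Pi$ through the marginals --- but the key estimate is carried out by a genuinely different mechanism. The paper never introduces the set $\{f>N\}$: it uses the pointwise subadditivity and monotonicity of the positive part,
$$
(h-k)_+\leq (f-k)_+=\left(\sum_{i=1}^n \left(f_i-\tfrac{k}{n}\right)\right)_+\leq \sum_{i=1}^n\left(f_i-\tfrac{k}{n}\right)_+,
$$
so the truncation error is dominated by a sum of \emph{single-coordinate} functions, the supremum over $\pi\in\Pi$ collapses at once to $\sum_{i}\int\left(f_i-\tfrac{k}{n}\right)_+d\mu_i$, and a single application of dominated convergence finishes the proof. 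Your bound $|h-h_N|\leq f\,\1_{\{f>N\}}$ is not of this form (the indicator depends on all coordinates), which is why you need the extra two-parameter argument: cut each $f_i$ at level $K$, evaluate the tail $\int_{\{f_i>K\}}f_i\,d\mu_i$ against the marginal, and control the remaining bounded part by Markov's inequality $\pi(\{f>N\})\leq\frac{1}{N}\sum_j\int f_j\,d\mu_j$. All of your steps check out: the reduction to $f_i\geq 0$, the identity $\int f_i\1_{\{f_i>K\}}\,d\pi=\int_{\{f_i>K\}} f_i\,d\mu_i$ valid for every $\pi\in\Pi$, and the choice of $K$ first and then $N$. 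What your route buys is the clean intermediate statement $\lim_{N\to\infty}\sup_{\pi\in\Pi}\int_{\{f>N\}}f\,d\pi=0$, i.e.\ that any $f\in F$ is uniformly integrable with respect to the whole family $\Pi(\mu)$ simultaneously, a fact of independent interest (it is also what underlies the paper's Corollary \ref{continuity of functional}); what the paper's route buys is brevity, since one pointwise inequality replaces the Markov-plus-double-truncation bookkeeping. Note that both arguments rely essentially on the additive structure of the dominating function $f=\sum_i f_i$, so neither is more general than the other in that respect.
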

\begin{proof}
 At first one can show that $\| \cdot \|_{C_L} \leq \| \cdot \|_{C_b}$ and hence the natural injection
 $C_b(X) \hookrightarrow C_L(\mu): f \rightarrow f$ is continuous.
 For any $h \in C_b(X)$
 $$
 \| h \|_L=\sup_{\pi \in \Pi}\int |h|d\pi \leq \sup_{x \in X}|h(x)|\cdot \sup_{\pi \in \Pi}\int d\pi=
 \sup_{x \in X}|h(x)|=\| h \|_{C_b}
 $$

 For the next step of the proof fix any $h\in C_L$, and let $|h|\leq f\in F=\bigoplus_{i=1}^n C_L(\mu_i)$. 
 Let $k \in \N$, $h^k=\min\{k,h\}$ and $h^k_k=\max\{\min\{k,h\},-k\}\in C_b(X)$. Note that $|h^k|\leq f$.
 Our goal is to show that $\|h-h^k_k\|_L \rightarrow 0$ as $k \rightarrow \infty$.
 $$
 \|h-h^k_k\|_L\leq\|h-h^k\|_L+\|h^k-h^k_k\|_L
 $$
 The fact $\pm h-k \leq f-k = \sum_{i=1}^n\left(f_i-\frac{k}{n}\right)$ together with positivity of the operator $(\cdot)_+:=\max\{\cdot,0\}$ implies
 $$
 \|h-h^k\|_L=\sup_{\pi \in \Pi}\int((h-k)_+)d\pi\leq \sum_{i=1}^n\int \left(f_i-\frac{k}{n}\right)_+d\mu_i\rightarrow 0, 
 \mbox{ as } k \rightarrow \infty
 $$
 $$
 \|h^k-h^k_k\|_L=\sup_{\pi \in \Pi}\int((-h^k-k)_+)d\pi\leq \sum_{i=1}^n\int\left(f_i-\frac{k}{n}\right)_+d\mu_i\rightarrow 0, 
 \mbox{ as } k \rightarrow \infty
 $$
 Convergence here is due to the Lebesgue dominated convergence theorem: 
 for any $k\in \N$ $\left(f_i-\frac{k}{n}\right)_+\leq |f_i|\in L_1(X_i,\mu_i)$ and $\left(f_i-\frac{k}{n}\right)_+\rightarrow 0$ pointwise.
 \end{proof}

It is a well-known consequence of the Prokhorov theorem that the set $\Pi(\mu)$ is compact in the topology of weak convergence.
Obviously the set $\Pi_W=\{\pi: \pi|_W=0\}$ is closed in such topology. Thus the set $\Pi_W(\mu)$ is also compact.
To establish the existence result we need to prove continuity of the functional: $\pi \rightarrow \int h d\pi$.
Fortunately it follows directly from the previous lemma.
\begin{corollary}
\label{continuity of functional}
The functional $\pi \rightarrow \int h d\pi$ from $\Pi(\mu)$ to $\R$ is continuous for any $h\in C_L(\mu)$.
\end{corollary}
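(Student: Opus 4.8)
The plan is to exploit the density Lemma just proved together with the uniform control furnished by the seminorm $\|\cdot\|_L$. The functional is already continuous on the dense subspace $C_b(X)$: for $g\in C_b(X)$ the map $\pi\mapsto\int g\,d\pi$ is continuous on $\Pi(\mu)$ by the very definition of the weak topology. The task is therefore to transfer this continuity to an arbitrary $h\in C_L(\mu)$ by a standard three-$\varepsilon$ approximation, the crucial ingredient being that the approximation error can be bounded \emph{uniformly} over all transport plans.

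Concretely, I would fix $h\in C_L(\mu)$ and $\pi_0\in\Pi(\mu)$, and let $\varepsilon>0$. By the Lemma, choose $g\in C_b(X)$ with $\|h-g\|_L<\varepsilon/3$. The key observation is that for any $\pi\in\Pi(\mu)$,
$$
\left|\int h\,d\pi-\int g\,d\pi\right|\le\int|h-g|\,d\pi\le\sup_{\gamma\in\Pi}\int|h-g|\,d\gamma=\|h-g\|_L<\frac{\varepsilon}{3},
$$
and this bound does not depend on the particular plan $\pi$. Then for any $\pi\in\Pi(\mu)$,
$$
\left|\int h\,d\pi-\int h\,d\pi_0\right|\le\|h-g\|_L+\left|\int g\,d\pi-\int g\,d\pi_0\right|+\|h-g\|_L.
$$

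Finally, since $g\in C_b(X)$, the middle term tends to $0$ as $\pi\to\pi_0$ in the weak topology; choosing a neighbourhood of $\pi_0$ on which it is smaller than $\varepsilon/3$ makes the whole right-hand side smaller than $\varepsilon$, proving continuity at $\pi_0$, and hence on all of $\Pi(\mu)$ since $\pi_0$ is arbitrary. (As $X$ is Polish, $\Pi(\mu)$ is metrizable, so one may equally well argue with sequences $\pi_m\to\pi_0$.) There is no serious obstacle here: the only point that genuinely uses the preceding theory, rather than a bare density argument, is the uniformity of the bound $\|h-g\|_L$ over all of $\Pi$, which is precisely what the seminorm $\|\cdot\|_L$—and hence the density Lemma phrased in the $\|\cdot\|_L$ topology—is designed to supply.
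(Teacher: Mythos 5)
Your proof is correct and follows essentially the same route as the paper: approximate $h$ in the $\|\cdot\|_L$ seminorm by a bounded continuous function (the density Lemma) and exploit that this yields a bound on $\left|\int h\,d\pi - \int g\,d\pi\right|$ that is uniform over all of $\Pi(\mu)$. The paper packages the final step as an interchange of a double limit, justified by precisely this uniformity, which is exactly what your explicit three-$\varepsilon$ estimate spells out.
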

\begin{proof}
We need to check that for any sequence of transport plans $(\pi_k)$ such that
$\lim_k \int \rho d\pi_k=\int \rho d\pi$ for any $\rho \in C_b$ we have $\lim_k\int h d\pi_k=\int h d\pi$.
Since $C_b$ is dense in $C_L$ there is a sequence $\rho_n \rightarrow h$ in $\|\cdot \|_L$ topology
and $\rho_n \in C_b$ for any $n \in \N$.
Note, that $\rho_n \rightarrow h$ in $\|\cdot \|_L$ means it tends to the limit uniformly with respect to the set of all transport plans.
This fact together with existence of the limits $\lim_k \int \rho_N d\pi_k$ and $\lim_n \int \rho_n d\pi_K$ for each sufficiently large $N$ and $K$
allows us to change the order of double limit in the following argument
$$
\lim_k\int h d\pi_k = \lim_k \lim_n \int \rho_n d\pi_k = \lim_n \lim_k \int \rho_n d\pi_k = \lim_n \int \rho_n d\pi = \int h d\pi
$$
\end{proof}

Compactness and continuity together implies the following existence criterion
\begin{proposition}
The Kantorovich problem with additional linear constraint has a solution if and only if $\Pi_W=\{\pi: \pi|_W=0\}$ is not empty.
\end{proposition}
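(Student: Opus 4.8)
The plan is to reduce this to the classical Weierstrass extreme value theorem, exploiting the compactness and continuity that are already in hand. The forward implication is immediate: if the problem admits a minimizer $\pi^*$, then $\pi^*$ is by definition an admissible plan, so $\Pi_W(\mu)$ contains at least one element and is therefore nonempty.

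For the converse, suppose $\Pi_W(\mu) \neq \emptyset$. First I would recall that $\Pi(\mu)$ is compact in the topology of weak convergence (a consequence of Prokhorov's theorem) and that $\Pi_W = \{\pi : \pi|_W = 0\}$ is weakly closed, since each constraint $\pi \mapsto \int w \, d\pi = 0$ with $w \in W$ cuts out a closed condition; hence $\Pi_W(\mu) = \Pi(\mu) \cap \Pi_W$ is a closed subset of a compact set and thus itself compact. Next, because the cost satisfies $c \in C_L(\mu)$, the objective functional $\pi \mapsto \int c \, d\pi$ is continuous on $\Pi(\mu)$ by the preceding corollary. Restricting this continuous functional to the nonempty compact set $\Pi_W(\mu)$, the extreme value theorem guarantees that the infimum is attained at some $\pi^* \in \Pi_W(\mu)$, which is then the desired solution.

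I do not expect a serious obstacle here: all of the genuinely analytic work has already been carried out in establishing compactness of $\Pi_W(\mu)$ and continuity of the objective. The only point deserving care is that continuity must be invoked for a possibly unbounded $c \in C_L(\mu)$ rather than for a bounded continuous function, so that the corollary (and hence the density of $C_b(X)$ in $C_L(\mu)$ underlying it) is used in an essential way; weak continuity would not be automatic otherwise. Past that subtlety, the argument is a direct application of Weierstrass.
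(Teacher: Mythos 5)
Your proposal is correct and follows essentially the same route as the paper: the paper likewise combines compactness of $\Pi_W(\mu)$ (Prokhorov plus weak closedness of $\{\pi:\pi|_W=0\}$) with the continuity of $\pi\mapsto\int c\,d\pi$ established in Corollary~\ref{continuity of functional}, and concludes by the Weierstrass argument. Your remark that continuity must be invoked for possibly unbounded $c\in C_L(\mu)$, which is exactly why the density lemma is needed, matches the paper's reasoning precisely.
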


\begin{remark}
One can replace the constraint $\pi|_W=0$ by $\pi|_{\bar{W}}=0$, where $\bar{W}$ is the closure of $W$ in the $\|\cdot\|_L$ topology.
By definition of such topology it is obvious that $\pi|_W=0 \iff \pi|_{\bar{W}}=0$, thus the replacing doesn't change anything.
\end{remark}

\section{Kantorovich duality}

The theorem below generalizes the well-known Kantorovich duality for the case of additional linear constraints.

\begin{theorem}
\label{Kantorovich duality}
Let $X_1,...,X_n$, $X=X_1\times \dots \times X_n$ be Polish spaces,
$\mu=\left(\mu_k \in \PP(X_k)\right)$ for $k=1,...,n$,
$W$ be a subspace of $C_L(\mu)$, $c \in C_L(\mu)$. Then
$$
\inf_{\pi\in \Pi_W}{\int c d\pi}=\sup_{f + \omega \leq c}{\sum_{k=1}^n \int_{X_k}{f_k(x_k) d\mu_k}}.
$$
where $f \in F=\bigoplus_{i=1}^n C_L(\mu_i)$, $f=\sum_{i=1}^n f_i(x_i)$, $\omega \in W$.
\end{theorem}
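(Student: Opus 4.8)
The plan is to prove the nontrivial inequality by reducing to the \emph{classical} (unconstrained) Kantorovich duality through a minimax interchange, after disposing of the easy direction by hand. For weak duality, fix $\pi\in\Pi_W$ and an admissible pair $(f,\omega)$ with $f+\omega\le c$. Integrating against $\pi$ and using that $\pi$ has marginals $\mu_k$ together with $\int\omega\,d\pi=0$ gives $\int c\,d\pi\ge\int(f+\omega)\,d\pi=\sum_k\int_{X_k}f_k\,d\mu_k$, all integrals being finite by the Remark since $c,\omega\in C_L(\mu)$. Taking $\inf$ over $\pi$ and $\sup$ over $(f,\omega)$ yields $\inf_{\Pi_W}\int c\,d\pi\ge\sup_{f+\omega\le c}\sum_k\int f_k\,d\mu_k$.

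For the reverse inequality I would first absorb the constraint $\pi|_W=0$ into a penalization over the larger, compact set $\Pi(\mu)$. Since $W$ is a linear space, for $\pi\in\Pi(\mu)$ one has $\sup_{\omega\in W}\bigl(-\int\omega\,d\pi\bigr)=0$ when $\pi\in\Pi_W$ and $+\infty$ otherwise, whence
$$
\inf_{\pi\in\Pi_W}\int c\,d\pi=\inf_{\pi\in\Pi(\mu)}\ \sup_{\omega\in W}\int(c-\omega)\,d\pi.
$$
The heart of the proof is to exchange this $\inf$ and $\sup$. I would invoke Sion's minimax theorem with $K=\Pi(\mu)$ and $L=W$: the set $\Pi(\mu)$ is convex and, by Prokhorov's theorem, compact in the weak topology; the pairing $(\pi,\omega)\mapsto\int(c-\omega)\,d\pi$ is finite for every $(\pi,\omega)$ (again by the Remark, as $c-\omega\in C_L(\mu)$), affine---hence both convex and concave---in each variable separately, continuous in $\pi$ for fixed $\omega$ by Corollary \ref{continuity of functional}, and continuous in $\omega$ for fixed $\pi$. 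These are exactly Sion's hypotheses, so
$$
\inf_{\pi\in\Pi(\mu)}\ \sup_{\omega\in W}\int(c-\omega)\,d\pi=\sup_{\omega\in W}\ \inf_{\pi\in\Pi(\mu)}\int(c-\omega)\,d\pi.
$$

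It remains to evaluate the inner infimum. For each fixed $\omega$ the cost $c-\omega$ lies in $C_L(\mu)$, and the classical Kantorovich duality gives $\inf_{\pi\in\Pi(\mu)}\int(c-\omega)\,d\pi=\sup_{f\in F,\,f\le c-\omega}\sum_k\int_{X_k}f_k\,d\mu_k$; this is the $W=\{0\}$ instance, which for $C_L(\mu)$ costs one recovers from the standard statement for bounded continuous costs by approximating $c-\omega$ in $\|\cdot\|_L$ by elements of $C_b(X)$ (density lemma) and passing to the limit via Corollary \ref{continuity of functional}. Rewriting $f\le c-\omega$ as $f+\omega\le c$ and then taking the supremum over $\omega$ collapses the double supremum into a single one over all admissible pairs, producing the right-hand side. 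The main obstacle is the minimax step: all the structure built in Section 1---compactness of $\Pi(\mu)$ and continuity of $\pi\mapsto\int h\,d\pi$ on $C_L(\mu)$---is marshalled precisely to verify Sion's hypotheses, and the finiteness of the pairing is the one point needing care, so that no spurious $+\infty$ arises except in the genuinely infeasible case $\Pi_W=\varnothing$, where both sides equal $+\infty$. A fully self-contained alternative that avoids citing classical duality is a single application of the Fenchel--Rockafellar theorem in $C_b(X)$ to $\Theta_1(u)=0$ if $u\le c$ (else $+\infty$) and $\Theta_2(u)=\sum_k\int f_k\,d\mu_k$ if $u=\sum_k f_k+\omega$ (else $+\infty$), whose conjugates recover $\int c\,d\pi$ on positive measures and the marginal together with the $W$ constraints, respectively.
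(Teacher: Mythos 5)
Your argument is, in substance, the paper's own proof run in the opposite direction. The paper starts from the dual side: it writes the right-hand side as $\sup_{\omega\in W}\sup_{f\le c-\omega}\sum_k\int f_k\,d\mu_k$, applies the unconstrained duality (Theorem \ref{Original Kantorovich duality}) to each cost $c-\omega\in C_L(\mu)$, interchanges $\sup_\omega$ and $\inf_\pi$ by a minimax theorem (Theorem \ref{minimax theorem}, from Adams--Hedberg, whose hypotheses are exactly the ones you verify for Sion: $\Pi(\mu)$ compact convex, the pairing affine, and lower semicontinuous in $\pi$ by Corollary \ref{continuity of functional}), and only then makes the penalization observation $\sup_{\omega\in W}\int(c-\omega)\,d\pi=+\infty$ for $\pi\notin\Pi_W$, which you instead make first. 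The weak-duality direction is identical. So all the genuine ingredients coincide; the choice of Sion versus the Adams--Hedberg version is immaterial (if anything, the paper's version asks for no topology on $W$ at all, so your verification of continuity in $\omega$ is superfluous).

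The one soft spot is your parenthetical claim that the unconstrained duality for $C_L(\mu)$ costs ``one recovers from the standard statement for bounded continuous costs by approximating $c-\omega$ in $\|\cdot\|_L$ and passing to the limit.'' That limit passage works on the primal side, since $\|\cdot\|_L$-convergence is uniform over $\Pi(\mu)$, but it does not obviously work on the dual side: a dual-feasible $f\le c_n$ cannot be turned into a dual-feasible function for $c-\omega$ using only the bound $\sup_{\pi}\int|c_n-(c-\omega)|\,d\pi\le\varepsilon$, because the correction must itself lie in $F$, i.e.\ be a sum of one-variable functions. For the truncation sequence this can be repaired --- the error $(g-k)_+$ is dominated by $\sum_i\left(f_i-\frac{k}{n}\right)_+\in F$, whose value under $T(f)=\sum_i\int f_i\,d\mu_i$ tends to $0$ --- and this domination-type control is precisely the role of the stronger seminorm $\|h\|_D=\inf\{T(f):f\ge|h|,\ f\in F\}$ in the paper's appendix, where Theorem \ref{Original Kantorovich duality} is instead proved outright via Hahn--Banach (the appendix also settles the nontrivial point, which your Fenchel--Rockafellar sketch would face as well, that the resulting functionals on $C_b(X)$ are actually measures concentrated on $X$ rather than arbitrary elements of $(C_b(X))^*$). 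If you simply cite the $C_L$ unconstrained duality as known (Kellerer; Rachev--R\"uschendorf), as the paper does in the body, your proof is complete as written.
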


To prove this theorem we need some extra results. First, it is a version of Kantorovich duality theorem for the original problem:
\begin{theorem}
\label{Original Kantorovich duality}
Let $X_1,...,X_n$, $X=X_1\times \dots \times X_n$ be Polish spaces,
$\mu=\left(\mu_k \in \PP(X_k)\right)$ for $k=1,...,n$, $c \in C_L(\mu)$. Then
$$
\inf_{\pi\in \Pi}{\int c d\pi}=\sup_{f\leq c}{\sum_{k=1}^n \int_{X_k}{f_k(x_k) d\mu_k}}.
$$
where $f \in F=\bigoplus_{i=1}^n C_L(\mu_i)$.
\end{theorem}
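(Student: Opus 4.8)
The plan is to prove the two inequalities separately, the direction $\inf\ge\sup$ being the elementary ``weak duality'' and $\inf\le\sup$ the substantive one. For weak duality I would fix an arbitrary $\pi\in\Pi$ and an admissible $f=\sum_{i=1}^n f_i\in F$ with $f\le c$. Since each $f_i\in C_L(\mu_i)\subset L^1(X_i,\mu_i)$ and $\pi$ has $i$-th marginal $\mu_i$, one has $\int f\,d\pi=\sum_{i=1}^n\int_{X_i}f_i\,d\mu_i$, while $\int c\,d\pi\ge\int f\,d\pi$ because $c\ge f$ and both lie in $C_L(\mu)$, hence are $\pi$-integrable. Taking the infimum over $\pi$ and then the supremum over $f$ gives $\inf_{\pi\in\Pi}\int c\,d\pi\ge\sup_{f\le c}\sum_k\int_{X_k}f_k\,d\mu_k$.

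The key idea for the hard direction is to reduce to a \emph{nonnegative} cost by a harmless shift, so that the classical multi-marginal Kantorovich duality becomes directly applicable. Because $c\in C_L(\mu)$, I would fix $f=\sum_{i=1}^n f_i\in F$ with $|c|\le f$ and set $\hat c:=c+f$. Then $\hat c$ is continuous and $\hat c\ge 0$, i.e.\ lower semicontinuous and bounded below, which is exactly the hypothesis under which the classical Kantorovich duality holds with bounded continuous potentials (see \cite{Vill1}, \cite{Bog-Kol}); note that $\hat c$ may be unbounded above, but this is permitted. This would yield
$$
\inf_{\pi\in\Pi}\int \hat c\,d\pi=\sup\Big\{\sum_{k=1}^n\int_{X_k}\phi_k\,d\mu_k:\phi_k\in C_b(X_k),\ \sum_{k=1}^n\phi_k\le\hat c\Big\}.
$$

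It then remains to undo the shift on both sides. On the left, since every $f_i$ is $\mu_i$-integrable the quantity $\int f\,d\pi=\sum_i\int_{X_i}f_i\,d\mu_i$ is a finite constant independent of $\pi\in\Pi$, so the left-hand side equals $\inf_\pi\int c\,d\pi+\sum_i\int_{X_i}f_i\,d\mu_i$. On the right, writing $\psi_k:=\phi_k-f_k$ I note that $\psi_k\in C_L(\mu_k)$ (as $C_b(X_k)\subset C_L(\mu_k)$, $\mu_k$ being a probability measure), the constraint $\sum_k\phi_k\le\hat c$ becomes $\sum_k\psi_k\le c$, and the objective becomes $\sum_k\int\psi_k\,d\mu_k+\sum_i\int f_i\,d\mu_i$. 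Cancelling the common constant gives
$$
\inf_{\pi\in\Pi}\int c\,d\pi=\sup\Big\{\sum_{k=1}^n\int_{X_k}\psi_k\,d\mu_k:\psi_k+f_k\in C_b(X_k),\ \sum_{k=1}^n\psi_k\le c\Big\}.
$$
The supremum on the right runs over a subset of the competitors allowed in the statement (where each $\psi_k$ ranges over all of $C_L(\mu_k)$), so it is bounded above by $\sup_{f\le c}\sum_k\int_{X_k}f_k\,d\mu_k$; this yields $\inf\le\sup$ and, combined with weak duality, the desired equality.

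The main obstacle I anticipate is precisely the legitimacy of invoking the classical duality for the shifted cost $\hat c$: one must use the form of the Kantorovich theorem valid for lower semicontinuous, bounded-below (and here possibly unbounded-above) costs on Polish spaces, in its multi-marginal version, and confirm that the potentials it produces may be taken bounded continuous, so that the $\psi_k=\phi_k-f_k$ are genuine elements of $C_L(\mu_k)$. Once this is granted, the shift-and-cancel bookkeeping is routine, the finiteness of $\int f\,d\pi$ over all of $\Pi$ being guaranteed by $f_i\in L^1(X_i,\mu_i)$. An alternative to the shift would be to approximate $c$ by truncations $c_m\in C_b(X)$ with $\|c_m-c\|_L\to 0$ and pass to the limit using the continuity Corollary, but the product structure of the potentials makes the passage to the limit on the dual side awkward, whereas the shift argument sidesteps it entirely.
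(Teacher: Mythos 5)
Your proof is correct, but it follows a genuinely different route from the paper's. The paper gives a self-contained functional-analytic argument: it defines the marginal functional $T(f)=\sum_i\int f_i\,d\mu_i$ on $F$, dominates it by the sublinear functional $U(h)=\inf\{T(f):f\ge h\}$, produces dual optimizers as Hahn--Banach extensions of $T$ (one of them pinned to the value $U(-c)$ at $-c$), proves positivity of these extensions, and then identifies them with genuine transport plans via the Riesz representation on the Stone--\v{C}ech compactification $\beta X$ together with a density argument for $C_b$ in $(C_L(\mu),\|\cdot\|_D)$. You instead reduce the $C_L$ statement to the already-known classical duality: the shift $\hat c=c+f$ with $|c|\le f\in F$ makes the cost continuous and nonnegative, the classical multi-marginal duality with $C_b$ potentials applies, and undoing the shift turns $C_b$ potentials into $C_L(\mu_k)$ potentials admissible for the stated supremum, with the constant $\sum_i\int f_i\,d\mu_i$ cancelling on both sides; weak duality closes the argument. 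Your reduction is shorter and conceptually clean --- it exhibits the $C_L$ duality as a change-of-variables corollary of the $C_b$ duality --- but it is only as strong as the classical theorem you cite: for $n=2$ this is exactly Theorem 5.10(i) of \cite{Vill1}, while for $n>2$ you must invoke the multi-marginal versions in \cite{kel1} or \cite{Rachev1}, which is presumably why the paper chose to include a self-contained Hahn--Banach proof rather than rest on those references. One small remark: since $c\in C_L(\mu)$ already satisfies $c\ge -f$ with each $-f_i$ integrable and upper semicontinuous, the two-marginal form of Villani's theorem would even apply to $c$ directly without the shift; your shift to a nonnegative cost is nevertheless the safer move, as it requires only the most basic form of the classical statement.
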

Proof of the similar version of the theorem can be found in \cite{Rachev1} or \cite{kel1}. In the Appendix we provide its complete proof.

The next statement we are going to use is the general version of minimax theorem from (\cite{Adams}, Th. 2.4.1).
The proof and all explanations can also be found there.
\begin{theorem}
\label{minimax theorem}
Let $K$ be a compact convex subset of a Hausdorff topological vector space, $Y$ be a convex subset of an arbitrary vector space, and $h$ be a real-valued function ($\leq +\infty$) on $K\times Y$,
which is lower semicontinuous in $x$ for each fixed $y$, convex on $K$, and concave on $Y$. Then
$$
\min_{x \in K}\sup_{y \in Y} h(x,y) = \sup_{y \in Y} \min_{x \in K} h(x,y)
$$
\end{theorem}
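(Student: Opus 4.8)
The plan is to prove this as a Sion-type minimax theorem, reducing the infinite-dimensional interchange to a finite-dimensional separation argument. First I would dispose of the routine direction and justify attainment of the minima: for each fixed $y$ the function $h(\cdot,y)$ is lower semicontinuous on the compact set $K$, so $\min_{x\in K}h(x,y)$ is attained; likewise $x\mapsto\sup_{y}h(x,y)$ is lower semicontinuous (a supremum of lower semicontinuous functions), so $\min_{x\in K}\sup_{y}h(x,y)$ is attained. The inequality $\sup_{y}\min_{x}h(x,y)\le\min_{x}\sup_{y}h(x,y)$ is immediate, so only the reverse needs work.

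For the reverse inequality, fix any real $\beta<\min_{x}\sup_{y}h(x,y)$ and consider the sublevel sets $C_y:=\{x\in K:h(x,y)\le\beta\}$. Each $C_y$ is closed, by lower semicontinuity in $x$, and convex, by convexity of $h(\cdot,y)$ on $K$. The choice of $\beta$ forces $\bigcap_{y\in Y}C_y=\emptyset$, since every $x$ admits some $y$ with $h(x,y)>\beta$. Because $K$ is compact, the finite intersection property yields finitely many $y_1,\dots,y_m$ with $\bigcap_{i=1}^{m}C_{y_i}=\emptyset$, equivalently $\min_{x\in K}\max_{1\le i\le m}h(x,y_i)>\beta$.

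The crux is to collapse these finitely many constraints into a single point of $Y$. I would pass to $\R^m$ and set $A:=\{v\in\R^m:\exists\,x\in K\ \text{with}\ h(x,y_i)\le v_i\ \text{for all }i\}$. Convexity of each $h(\cdot,y_i)$ shows $A$ is convex (take the midpoint of the two witnessing points $x$), lower semicontinuity together with compactness of $K$ shows $A$ is closed, and $A$ is upward closed, i.e. $A+\R^m_{\ge0}\subseteq A$. The emptiness of $\bigcap_iC_{y_i}$ says exactly that $\beta\mathbf{1}\notin A$. Separating the point $\beta\mathbf{1}$ from the closed convex set $A$ gives a nonzero $\lambda\in\R^m$ and a constant $\gamma$ with $\langle\lambda,v\rangle\ge\gamma>\langle\lambda,\beta\mathbf{1}\rangle$ for all $v\in A$; upward closedness forces $\lambda\ge0$, so after normalizing $\sum_i\lambda_i=1$ the right-hand side becomes $\beta$ and we obtain $\sum_{i=1}^{m}\lambda_i\,h(x,y_i)\ge\gamma>\beta$ for every $x\in K$. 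Finally, concavity of $h(x,\cdot)$ and convexity of $Y$ let me set $\bar y:=\sum_i\lambda_iy_i\in Y$ and conclude $h(x,\bar y)\ge\sum_i\lambda_i h(x,y_i)>\beta$ for all $x$, hence $\sup_{y}\min_{x}h(x,y)\ge\min_{x}h(x,\bar y)>\beta$. Letting $\beta\uparrow\min_x\sup_y h$ yields the reverse inequality, and combined with the easy direction, equality.

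I expect the main obstacle to be this separation step: one must verify carefully that $A$ is genuinely convex and closed (convexity uses convexity in $x$, closedness uses lower semicontinuity in $x$ together with compactness of $K$) and that the separating functional can be taken nonnegative with unit mass, so that it defines an honest convex combination $\bar y\in Y$. A secondary technical point is the allowance $h\le+\infty$: since the sublevel sets and the vectors $(h(x,y_i))_i$ used above involve only finite values $\le\beta$, the infinite values cause no trouble and are absorbed by the trivial bound $+\infty\ge\beta$. Note that no semicontinuity in $y$ is needed, which is consistent with compactness being imposed on the $x$-side.
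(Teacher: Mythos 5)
The paper itself contains no proof of this statement: it is imported verbatim from Adams and Hedberg (Theorem 2.4.1 in \cite{Adams}), with the proof explicitly deferred to that reference. Measured against the standard proof in the literature, your argument is exactly the classical Kneser--Fan/Sion route that the cited source follows: the easy inequality plus attainment via lower semicontinuity on the compact $K$; reduction of the infinite family of closed convex sublevel sets $C_y$ to a finite subfamily with empty intersection via the finite intersection property; and then the collapse of the finitely many constraints $y_1,\dots,y_m$ into a single $\bar y=\sum_i\lambda_i y_i$ by separating $\beta\mathbf{1}$ from the upward-closed convex set $A\subset\R^m$ and using concavity in $y$. This is correct in all essentials, and you rightly observe that no semicontinuity in $y$ is required.

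Two small points deserve a line of care, though neither is a gap in the idea. First, in proving $A$ closed, the witnesses $x_k\in K$ live in a compact set that need not be metrizable, so you should pass to a convergent subnet rather than a subsequence; lower semicontinuity is stable along nets, so the argument goes through unchanged (the $v^{(k)}$ themselves are in $\R^m$, where sequences suffice). Second, the allowance $h\le+\infty$ needs slightly more bookkeeping than the trivial bound you invoke: if $h(x,y_i)=+\infty$ for some $i$, the vector $\left(h(x,y_i)\right)_i$ is not a point of $\R^m$, so to deduce $\sum_i\lambda_i h(x,y_i)\ge\gamma$ you should insert the truncated vectors with infinite coordinates replaced by a large $t$ (these lie in $A$) and let $t\to\infty$, with the convention $0\cdot(+\infty)=0$ when $\lambda_i=0$ on the infinite coordinates. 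Relatedly, $A$ can be empty --- namely when every $x\in K$ satisfies $\max_i h(x,y_i)=+\infty$ --- in which case there is nothing to separate; but then the equal weights $\lambda_i=1/m$ give $h(x,\bar y)\ge\frac1m\sum_i h(x,y_i)=+\infty>\beta$ for every $x$ directly by concavity. With these one-line repairs your proof is complete.
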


Now we are ready to prove our result.
\begin{proof}
\emph{(of the Kantorovich duality theorem (\ref{Kantorovich duality}))}

The inequality
$$
\inf_{\pi\in \Pi_W}{\int c d\pi} \geq \sup_{f + \omega \leq c}{\sum_{k=1}^n \int {f_k d\mu_k}}
$$
is almost obvious:
\begin{multline*}
 \inf_{\pi\in \Pi_W}{\int c d\pi} \geq \inf_{\pi\in \Pi_W}\sup_{f+\omega \leq c}{\int (f+\omega) d\pi} =\\
 =\inf_{\pi\in \Pi_W}\sup_{f+\omega \leq c}{\sum_{i=1}^n\int f_i d\mu_i}=\sup_{f+\omega \leq c}{\sum_{i=1}^n\int f_i d\mu_i}
\end{multline*}

Prove the opposite inequality.
$$
\sup_{f + \omega \leq c}{\sum_{k=1}^n \int {f_k d\mu_k}}=\sup_{\omega \in W} \sup_{f \in F, f \leq (c-\omega)}{\sum_{k=1}^n \int {f_k d\mu_k}}=
\sup_{\omega \in W} \inf_{\pi\in \Pi}{\int (c-\omega) d\pi}
$$
Here we used theorem (\ref{Original Kantorovich duality}) for the cost function $(c-\omega) \in C_L(\mu)$.
The next step is to use minimax theorem (\ref{minimax theorem}) to interchange infimum and supremum.
Using the notation of that theorem assume $K=\Pi(\mu)$, $Y=W$, $h(\pi,\omega)=\int (c-\omega) d\pi$.
Note that $h(\pi,\omega)$ is linear in both arguments and continuous in $\pi$ for any fixed $\omega$
(it has been already proven by us, see (\ref{continuity of functional})). Hence all assumptions of the theorem are satisfied and we obtain
$$
\sup_{\omega \in W} \inf_{\pi\in \Pi}{\int (c-\omega) d\pi}=\inf_{\pi\in \Pi}\sup_{\omega \in W}{\int (c-\omega) d\pi}
$$
If $\pi \notin \Pi_W$, then there exists $\omega_1 \in W$ such that $\int \omega_1 d\pi < 0$.
The choice $\omega = \alpha \omega_1$, $\alpha \rightarrow +\infty$ shows that the supremum $\sup_{\omega \in W}{\int (c-\omega) d\pi}$ is $+\infty$.
Thus we conclude:
$$
\inf_{\pi\in \Pi}\sup_{\omega \in W}{\int (c-\omega) d\pi}=\inf_{\pi\in \Pi_W}{\int c d\pi}
$$
And it is exactly what we need to complete the proof.
\end{proof}

\begin{remark}
 Note that the proposition of the theorem remains true for the case of empty $\Pi_W(\mu)$ if we agree to define $\inf(\varnothing)=+\infty$.
\end{remark}

By the same argument the following version of the Kantorovich duality for continuous bounded functions can be obtained:
\begin{theorem}
\label{bounded Kantorivich duality}
Let $X_1,...,X_n$, $X=X_1\times \dots \times X_n$ be Polish spaces,
$\mu=\left(\mu_k \in \PP(X_k)\right)$ for $k=1,...,n$,
$W$ be a subspace of $C_b(\mu)$, $c \in C_b(\mu)$ Then
$$
\inf_{\pi\in \Pi_W}{\int c d\pi}=\sup_{f + \omega \leq c}{\sum_{k=1}^n \int_{X_k}{f_k(x_k) d\mu_k}}.
$$
where $f \in F=\bigoplus_{i=1}^n C_b(X_i)$, $f=\sum_{i=1}^n f_i(x_i)$, $\omega \in W$.
\end{theorem}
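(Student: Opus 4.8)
The plan is to run the proof of Theorem~\ref{Kantorovich duality} essentially word for word, with $C_L(\mu)$ replaced everywhere by $C_b(X)$ and $F=\bigoplus_i C_L(\mu_i)$ replaced by $F=\bigoplus_i C_b(X_i)$. Only two auxiliary inputs need to be reinterpreted in the bounded setting, namely the unconstrained duality and the continuity of the evaluation functional, and both turn out to be at least as easy as before. I would begin with the trivial inequality
$$
\inf_{\pi\in \Pi_W}{\int c\, d\pi} \geq \sup_{f + \omega \leq c}{\sum_{k=1}^n \int f_k\, d\mu_k},
$$
which is unchanged: for any admissible pair $f+\omega\le c$ with $\omega\in W$ and any $\pi\in\Pi_W$ one has $\int c\,d\pi\ge\int(f+\omega)\,d\pi=\sum_k\int f_k\,d\mu_k+\int\omega\,d\pi=\sum_k\int f_k\,d\mu_k$, using $\int\omega\,d\pi=0$ and that the marginals of $\pi$ are the $\mu_k$.

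For the reverse inequality I would first decouple the two suprema,
$$
\sup_{f + \omega \leq c}{\sum_{k=1}^n \int f_k\, d\mu_k}=\sup_{\omega \in W}\ \sup_{f\in F,\, f\le c-\omega}{\sum_{k=1}^n \int f_k\, d\mu_k},
$$
and then apply the unconstrained Kantorovich duality, in its bounded form, to the cost $c-\omega\in C_b(X)$, turning the inner supremum into $\inf_{\pi\in\Pi}\int(c-\omega)\,d\pi$. The ingredient needed here is the analogue of Theorem~\ref{Original Kantorovich duality} with potentials ranging over $F=\bigoplus_i C_b(X_i)$ rather than $\bigoplus_i C_L(\mu_i)$; this is the classical multi-marginal Kantorovich--Rubinstein duality for bounded continuous cost, and is exactly what the Appendix argument for Theorem~\ref{Original Kantorovich duality} delivers when specialized to bounded data. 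Next I would invoke the minimax theorem~\ref{minimax theorem} with $K=\Pi(\mu)$, $Y=W$, and $h(\pi,\omega)=\int(c-\omega)\,d\pi$. As before $h$ is affine in each variable, the compactness and convexity of $\Pi(\mu)$ are provided by Prokhorov's theorem, and $Y=W$ is convex. The one point that is genuinely \emph{simpler} here is the semicontinuity of $\pi\mapsto h(\pi,\omega)$: since $c-\omega\in C_b(X)$, continuity in $\pi$ is immediate from the very definition of the weak topology, so Corollary~\ref{continuity of functional} (and with it the density lemma it relied on) is not needed. The minimax theorem then yields
$$
\sup_{\omega \in W}\inf_{\pi\in\Pi}\int(c-\omega)\,d\pi=\inf_{\pi\in\Pi}\sup_{\omega\in W}\int(c-\omega)\,d\pi.
$$

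Finally I would close the argument by the same penalization as in Theorem~\ref{Kantorovich duality}: for $\pi\notin\Pi_W$ there is $\omega_1\in W$ with $\int\omega_1\,d\pi<0$, and scaling $\omega=\alpha\omega_1$ with $\alpha\to+\infty$ drives $\sup_{\omega\in W}\int(c-\omega)\,d\pi$ to $+\infty$; hence the outer infimum is effectively restricted to $\Pi_W$ and equals $\inf_{\pi\in\Pi_W}\int c\,d\pi$, completing the chain of equalities. Since every step is either identical to the $C_L$ proof or a strict simplification of it, I do not expect a genuine obstacle. The only point requiring care is to confirm that the unconstrained duality holds with \emph{bounded} continuous potentials for bounded continuous cost — the natural specialization of Theorem~\ref{Original Kantorovich duality} — rather than with the larger $C_L$ potentials used there, since the two suprema are a priori taken over different classes of $f$.
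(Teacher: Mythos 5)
Your proposal is correct and follows essentially the same route as the paper: the paper's entire proof of Theorem~\ref{bounded Kantorivich duality} is the remark that one reruns the proof of Theorem~\ref{Kantorovich duality} verbatim, replacing Theorem~\ref{Original Kantorovich duality} by the classical bounded-continuous Kantorovich duality (Theorem~5.10 in \cite{Vill1}), which is exactly your decoupling--unconstrained-duality--minimax--penalization chain. The one caveat you rightly flag (that the unconstrained duality must hold with potentials in $\bigoplus_i C_b(X_i)$, not the larger $C_L$ class) is handled in the paper by citing the classical result rather than by respecializing the Appendix argument, but either resolution is fine since the Appendix's Hahn--Banach construction does go through with $F=\bigoplus_i C_b(X_i)$.
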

The only thing we need to change in the previous proof is to use instead of \ref{Original Kantorovich duality}
the version of classical Kantorovich duality with bounded continuous functions (for example, theorem (5.10) from (\cite{Vill1}).

\section{Geometry of optimal transport plans}
In this section we are going to formulate an analogue of c-monotonicity for our problem and prove its necessity for optimal transport plans.

\begin{definition}
 For two measures $\alpha$, $\beta$ on $X=X_1\times\dots\times X_n$ define the equivalence relation $\sim_W$:
 $\alpha \sim_W \beta \mbox{ iff}$
 \begin{enumerate}
  \item $(\operatorname{\operatorname{Pr}}_k)_{\#}(\alpha)=(\operatorname{\operatorname{Pr}}_k)_{\#}(\beta) \mbox{  } \forall k=1,...,n$
  \item $\int \omega d\alpha=\int \omega d\beta \mbox{  } \forall \omega \in W$
 \end{enumerate}

\end{definition}
We denote by $[\beta]_W$ the equivalence class of $\beta$ with respect to $\sim_W$.
Let $S_m$ be a set of $m$ points in the space $X$, $\beta_s$ be a measure with the support $S_m$.

\begin{definition}
For a Borel measurable cost function $c:X \rightarrow \R$ and a linear subspace $W \subset C_L(\mu)$
a set $\Gamma \subset X$ is called $(c,W)$-monotone if and only if for any $m \in \N$,
any $S_m \subset \Gamma$ any measure $\beta_s$, such that $\operatorname{supp}(\beta_s) = S_m$, and any measure $\alpha \sim_W \beta_s$:
$$
\int c d \beta_s \leq \int c d\alpha
$$
\end{definition}
\begin{remark}
Due to the linearity of integrals nothing is changed if we consider only probability measures $\beta_s$ in this definition.
\end{remark}
\begin{proposition}
 If $W=\{0\}$, then the notion of $(c,W)$-monotonicity is equivalent to the notion of usual $c$-monotonicity.
\end{proposition}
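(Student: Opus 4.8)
The plan is to read off the definition in the degenerate case $W=\{0\}$ and then reconcile it with the combinatorial (permutation) form of classical $c$-monotonicity. First I would note that when $W=\{0\}$ the second requirement in the definition of $\sim_W$, that $\int\omega\,d\alpha=\int\omega\,d\beta$ for all $\omega\in W$, is automatically satisfied; hence $\alpha\sim_{\{0\}}\beta$ holds precisely when $(\operatorname{Pr}_k)_\#\alpha=(\operatorname{Pr}_k)_\#\beta$ for every $k=1,\dots,n$. Consequently $(c,\{0\})$-monotonicity of $\Gamma$ says exactly that for every finite $S_m\subset\Gamma$, every $\beta_s$ with $\operatorname{supp}(\beta_s)=S_m$, and every $\alpha$ having the same marginals as $\beta_s$, one has $\int c\,d\beta_s\le\int c\,d\alpha$; by the preceding remark I may assume $\beta_s$ is a probability measure. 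The content of the proposition is that this measure inequality is equivalent to the cyclical inequalities $\sum_{j}c(p_j)\le\sum_{j}c(q_j)$ that define usual $c$-monotonicity, where $p_j=(x_1^j,\dots,x_n^j)\in\Gamma$ and $q_j=(x_1^j,x_2^{\sigma_2(j)},\dots,x_n^{\sigma_n(j)})$ for permutations $\sigma_2,\dots,\sigma_n$.

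For the implication from $(c,\{0\})$-monotonicity to usual $c$-monotonicity I would simply exhibit a competitor realizing a given permutation rearrangement. Given points $p_1,\dots,p_m\in\Gamma$ and permutations $\sigma_2,\dots,\sigma_n$, put $\beta_s=\frac1m\sum_j\delta_{p_j}$ and $\alpha=\frac1m\sum_j\delta_{q_j}$. Because each $\sigma_k$ is a bijection, the $k$-th projection pushes both measures to $\frac1m\sum_j\delta_{x_k^j}$, so $\alpha\sim_{\{0\}}\beta_s$, and the inequality $\int c\,d\beta_s\le\int c\,d\alpha$ is verbatim the cyclical inequality after multiplying by $m$. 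Coinciding points are absorbed by combining atoms and adding their masses.

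The converse, that usual $c$-monotonicity already controls every competitor $\alpha$ and not merely the permutation couplings, is the heart of the matter. I would fix $\beta_s=\sum_j a_j\delta_{p_j}$ and observe that its marginals are supported on the finite sets $V_k=\{x_k^1,\dots,x_k^m\}$, so any competitor is carried by the finite grid $V_1\times\dots\times V_n$ and the competitors form a compact convex transportation polytope on which $\alpha\mapsto\int c\,d\alpha$ is affine; its minimum is attained at an extreme point, so it suffices to dominate extreme points by $\beta_s$. The key step is then to represent each such extreme competitor as a convex combination of permutation couplings of the atoms of $\beta_s$: for equal weights this is precisely Birkhoff's theorem on the transportation polytope, and by convexity the cyclical inequalities established above pass from the permutation couplings to the combination, giving $\int c\,d\beta_s\le\int c\,d\alpha$. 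I would reduce the general case to equal weights by first taking rational $a_j=b_j/N$ and replicating each atom $p_j$ into $b_j$ identical copies, and then removing the rationality assumption by continuity of $\alpha\mapsto\int c\,d\alpha$ and of the polytope in its marginals. The main obstacle is exactly this representation step — cleanly supplied by Birkhoff's theorem in the two-marginal situation, and requiring the cyclic-exchange analysis of the kernel of the marginal map to justify it in general.
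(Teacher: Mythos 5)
Your first direction is the same as the paper's (and is the easy one): when $W=\{0\}$ the relation $\sim_W$ reduces to equality of marginals, and the permutation rearrangements $q_j=(x_1^j,x_2^{\sigma_2(j)},\dots,x_n^{\sigma_n(j)})$ are competitors, so $(c,\{0\})$-monotonicity yields the cyclical inequalities. For the converse the paper does something much shorter than you do: it cites the known implication (from Villani) that a measure whose support is $c$-monotone is optimal in the class of measures with the same marginals, and applies it to each finitely supported $\beta_s$ whose support lies in $\Gamma$ (using that subsets of $c$-monotone sets are $c$-monotone). You instead try to reprove that implication for finitely supported measures by linear programming: competitors live on a finite grid, the minimum of an affine functional over the competitor polytope is attained at an extreme point, and --- your key step --- every extreme competitor lies in the convex hull of the permutation couplings of the atoms of $\beta_s$.

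That key step is where your proof breaks, and not merely as a missing technicality: in the paper's multi-marginal setting the claim is false for $n\geq 3$. Take $n=3$, $X_i=\{0,1\}$, and $\beta_s=\frac12\left(\delta_{(0,0,0)}+\delta_{(1,1,1)}\right)$. The permutation couplings of its two atoms are exactly the four measures assigning mass $\frac12$ to each point of one of the pairs $\{(0,0,0),(1,1,1)\}$, $\{(0,1,0),(1,0,1)\}$, $\{(0,0,1),(1,1,0)\}$, $\{(0,1,1),(1,0,0)\}$, so every convex combination of them gives equal mass to the two points of each such pair. But $\alpha=\frac14\left(\delta_{(0,0,0)}+\delta_{(0,1,1)}+\delta_{(1,0,1)}+\delta_{(1,1,0)}\right)$ has the same uniform marginals as $\beta_s$ --- it is a competitor --- while charging $(0,0,0)$ and not $(1,1,1)$; hence it lies outside the convex hull of the permutation couplings, and consequently the competitor polytope has extreme points that are not in that hull. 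Birkhoff's theorem genuinely fails for three or more marginals, so the ``cyclic-exchange analysis'' you defer to cannot exist at the generality you need. Your argument is a correct, self-contained and more elementary proof for $n=2$, but it establishes nothing for $n\geq 3$; indeed, whether the permutation-based inequalities imply optimality among same-marginal measures is precisely the delicate issue in multi-marginal transport, and it is why that literature defines $c$-monotonicity through competitors, i.e., through what the paper calls $(c,\{0\})$-monotonicity. The paper sidesteps all of this by taking the cited implication as its input (in effect, ``usual $c$-monotonicity'' there is whatever notion satisfies that implication), so to repair your proof you must either restrict the proposition to two marginals or replace the Birkhoff step by an appeal to that cited theorem --- which is exactly the paper's proof.
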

\begin{proof}
 $(c,\{0\})$-monotonicity obviously implies $c$-monotonicity.
 The converse statement follows from the well-known (see \cite{Vill1}) implication: c-monotonicity of $\operatorname{supp}(\beta)$ implies that $\beta$ is optimal in the class of
 measures on $X$ with the same marginals with $\beta$.
 Indeed, if $W=\{0\}$ then this class coincide with the equivalence class $[\beta]_W$. Thus $\beta$ is optimal in $[\beta]_W$ for any $\beta$ with
 the support consisted of the finite number of points and laid in the $c$-monotone set (here we also use the fact that a subset of c-monotone set is also c-monotone).
 \end{proof}

\begin{definition}
 A transport plan $\pi \in \Pi_W(X)$ is called $(c,W)$-monotone iff there is a $(c,W)$-monotone set $\Gamma$ of full $\pi$-measure:
 $\pi(\Gamma)=1$.
\end{definition}

\begin{theorem}
 Let $X_k$ $(k=1,...,n)$, $X=X_1\times\dots\times X_n$ be Polish spaces, $\mu=\left(\mu_k \in \PP(X_k)\right)$,
 $c \in C_L(\mu)$ is a cost function, $W \subset C_L(\mu)$ is a vector subspace, $\mu_k|_{W\cap C_L(\mu_k)}=0$, and $\pi_* \in \Pi_W(\mu)$
 is the minimizer of the primal Kantorovich problem with additional linear constraints:
 $$\inf_{\pi \in \Pi_W}\int_{X} c d\pi$$
 then $\pi_*$ is a $(c,W)$-monotone transport plan.
\end{theorem}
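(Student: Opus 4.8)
The plan is to construct the required $(c,W)$-monotone set directly from the duality of Theorem \ref{Kantorovich duality}, exploiting the fact that the competitor measures $\beta_s$ appearing in the definition of $(c,W)$-monotonicity are supported on \emph{finitely many} points. Since the dual supremum need not be attained, I would begin by fixing a maximizing sequence: functions $f^j = \sum_{i=1}^n f_i^j \in F$ and $\omega^j \in W$ with $f^j + \omega^j \leq c$ everywhere and $\sum_{k=1}^n \int f_k^j \, d\mu_k \to \inf_{\pi \in \Pi_W} \int c \, d\pi = \int c \, d\pi_*$. Setting $\phi_j := c - f^j - \omega^j \geq 0$, each $\phi_j$ is continuous, lies in the vector space $C_L(\mu)$, and is therefore $\pi_*$-integrable by the Remark following the definition of $C_L(\mu)$.

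The first key step is to show $\int \phi_j \, d\pi_* \to 0$. Because $\pi_* \in \Pi_W(\mu)$ we have $\int \omega^j \, d\pi_* = 0$, while $\int f_i^j \, d\pi_* = \int f_i^j \, d\mu_i$ since $\pi_*$ has marginals $\mu_i$. Hence $\int \phi_j \, d\pi_* = \int c \, d\pi_* - \sum_k \int f_k^j \, d\mu_k \to 0$. As $\phi_j \geq 0$, this is convergence to zero in $L^1(\pi_*)$, so after passing to a subsequence I may assume $\phi_j \to 0$ $\pi_*$-a.e. Then $\Gamma := \{x : \phi_j(x) \to 0\}$ satisfies $\pi_*(\Gamma) = 1$, and $\Gamma$ is the candidate full-measure set.

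It remains to verify that $\Gamma$ is $(c,W)$-monotone. Given a finite $S_m \subset \Gamma$, a probability measure $\beta_s$ with $\operatorname{supp}(\beta_s) = S_m$, and any $\alpha \sim_W \beta_s$, I would apply dual feasibility $f^j + \omega^j \leq c$ to $\alpha$, together with the two defining properties of $\sim_W$: matching marginals give $\int f^j \, d\alpha = \int f^j \, d\beta_s$ (each $f_i^j$ depends on $x_i$ only), and matching $W$-integrals give $\int \omega^j \, d\alpha = \int \omega^j \, d\beta_s$ since $\omega^j \in W$. These identities yield
$$\int c \, d\beta_s - \int \phi_j \, d\beta_s = \int (f^j + \omega^j) \, d\beta_s = \int (f^j + \omega^j) \, d\alpha \leq \int c \, d\alpha,$$
so $\int c \, d\beta_s \leq \int c \, d\alpha + \int \phi_j \, d\beta_s$. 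All integrals are finite: $c, f_i^j, \omega^j \in C_L(\mu)$ and $\alpha, \beta_s$ share the same finitely supported marginals, so $\alpha$ is automatically a probability measure and $\int |c|\,d\alpha \leq \int f\,d\alpha < \infty$ for a dominating $f \in F$.

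Finally I would let $j \to \infty$. Writing $\beta_s = \sum_l p_l \delta_{x^{(l)}}$ with each $x^{(l)} \in \Gamma$, we get $\int \phi_j \, d\beta_s = \sum_l p_l \, \phi_j(x^{(l)}) \to 0$, and the inequality collapses to $\int c \, d\beta_s \leq \int c \, d\alpha$, which is exactly the $(c,W)$-monotonicity of $\Gamma$. The one genuinely delicate point is the possible non-attainment of the dual supremum, which forces the passage to a maximizing sequence and to a.e.\ convergence; it is precisely the \emph{finiteness} of $S_m$ that rescues the argument, since pointwise convergence $\phi_j \to 0$ on $\Gamma$ suffices to send $\int \phi_j \, d\beta_s$ to zero for finitely supported $\beta_s$ — an estimate that would fail for general competitor measures and is the reason the definition restricts to finite configurations.
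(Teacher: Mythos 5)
Your proposal is correct and follows essentially the same route as the paper: take a maximizing sequence $(f^j,\omega^j)$ for the dual problem of Theorem \ref{Kantorovich duality}, observe that $\phi_j = c - f^j - \omega^j \geq 0$ has $\int \phi_j \, d\pi_* \to 0$, extract a subsequence converging $\pi_*$-a.e.\ to obtain the full-measure set $\Gamma$, and use the two defining properties of $\sim_W$ plus the finiteness of the support of $\beta_s$ to pass to the limit in $\int c \, d\beta_s \leq \int c\, d\alpha + \int \phi_j \, d\beta_s$. Your closing remark correctly identifies the same crux the paper relies on, namely that pointwise convergence on $\Gamma$ suffices only because the competitor measures are finitely supported.
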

The following proof relies on the proven duality result from the previous section.
\begin{proof}
 By the Kantorovich duality statement (\ref{Kantorovich duality})
 $$\int_X c d\pi_* = \sup \left\{ \sum_{k=1}^n\int f_k d\mu_k\right\} $$
 where supremum runs among all pairs $(f, \omega) \in F\times W$ such that
 $f+\omega\leq c$. Recall the notation: $F=\bigoplus_{i=1}^n C_L(\mu_i)$, $f=\sum_{i=1}^n f_i(x_i)$
 Let $(f^{(k)},\omega_k)$ be a maximizing sequence in the dual problem and let $c_k=c-f^{(k)}-\omega_k$. Since
 $$
 \int_X c_k d\pi_*=\int_X c d\pi_*-\sum_{i=1}^n\int_{X_i} f^{(k)}_i d\mu_i \rightarrow 0
 $$
 and $c_k \geq 0$ we can find a subsequence $c_{k(j)}$ and a Borel set $\Gamma$ for which $\pi_*(\Gamma)=1$, such that $c_{k(j)} \rightarrow 0$ on $\Gamma$. In the following by slightly abuse of notation we will denote indices $\{k(j)\}$ simply by $\{j\}$.
 If $S=\{x_i\}_{i=1}^m \subset \Gamma$, $\beta_s$ is a measure with the support $S$ and $\alpha \in [\beta_s]_W$ we get
 $$
  \int c d\alpha \geq \sum_{i=1}^n\int f^{(k)}_i d\alpha + \int \omega_k d\alpha
 $$
 Since
 $$
  (\operatorname{Pr}_i)_{\#}(\alpha)=(\operatorname{Pr}_i)_{\#}(\beta) \mbox{ for all }i\in{1,...,m}
 \implies \int f^{(k)}_i d\alpha=\int f^{(k)}_i d\beta_s
 $$
 $$
  \int \omega d\alpha=\int \omega d\beta \mbox{ for all }\omega \in W \implies \int \omega_k d\alpha= \int \omega_k d\beta_s
 $$
 we obtain
 $$
 \int c d\alpha \geq \sum_{j=1}^n\int f^{(k)}_i d\beta_s +
  \int \omega_k d\beta_s  = \int (c - c_k) d\beta_s
 $$
 for any $k$. Letting $k \rightarrow \infty$ the $(c,W)$-monotonicity of $\Gamma$ follows.
\end{proof}
 Note, that the sufficiency of $(c,W)$-monotonicity is not established in the general case.

\section{Martingale Monge-Kantorovich problem}
 In this section we show that so-called martingale Monge-Kantorovich problem can be seen as a
 particular case of the Monge-Kantorovich problem with additional linear constraints.
 General theory, developed in previous chapters, implies the Kantorovich duality statement and some over known results about martingale optimal plans.
 %Let $Y$ be a Polish topological vector space (or, equivalently, a separable Freschet space)
 %and $X_1=X_2=...=X_n=Y$, $X=Y^n$. Consider $\mu_k \in \PP(Y)$ ($k \in \{1,...,n\})$ such that each of %them has a finite first moment. Define
 Let $X_1=X_2=...=X_n=\R$, $X=\R^n$. Consider $\mu_k \in \PP(\R)$ ($k \in \{1,...,n\})$ such that each of them has a finite first moment. Define
 $$
 W=\left\{\sum_{k=1}^{n-1}\rho(x_1,...,x_k)(x_{k+1}-x_k): \rho(x_1,...,x_k) \in C_b(X)\right\}
 $$
 \begin{proposition}
  In the defined setting $W \subset C_L(\mu)$
 \end{proposition}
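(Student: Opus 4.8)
The plan is to take an arbitrary $w \in W$ and, using the definition of $C_L(\mu)$ directly, exhibit a single dominating function $f = \sum_{i=1}^n f_i(x_i) \in F$ with $|w| \le f$, while checking that $w$ itself is continuous. First I would fix $w = \sum_{k=1}^{n-1}\rho_k(x_1,\dots,x_k)(x_{k+1}-x_k)$ with each $\rho_k \in C_b(X)$, and observe that $w \in C(X)$ as a finite sum of products of continuous functions. Setting $M_k := \sup_X |\rho_k| < \infty$, the triangle inequality yields
$$
|w| \le \sum_{k=1}^{n-1} M_k\,|x_{k+1}-x_k| \le \sum_{k=1}^{n-1} M_k\bigl(|x_k| + |x_{k+1}|\bigr).
$$

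The next step is to regroup this bound coordinate by coordinate so that it takes the additive, single-variable form required by the definition of $C_L(\mu)$. Collecting the coefficient of $|x_i|$, I would set $f_i(x_i) := c_i\,|x_i|$ with $c_1 = M_1$, $c_n = M_{n-1}$, and $c_i = M_{i-1}+M_i$ for $1 < i < n$; then $|w| \le f := \sum_{i=1}^n f_i(x_i)$ by construction.

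Finally I would check that each $f_i \in C_L(\mu_i)$. The map $x_i \mapsto |x_i|$ is continuous, and it belongs to $L^1(\R,\mu_i)$ precisely because $\mu_i$ was assumed to have finite first moment; hence $c_i|x_i| \in L^1(\R,\mu_i)\cap C(\R) = C_L(\mu_i)$, so $f \in F$. Since $|w| \le f$ and $w$ is continuous, the definition of $C_L(\mu)$ gives $w \in C_L(\mu)$, and as $w \in W$ was arbitrary we conclude $W \subset C_L(\mu)$.

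This argument is essentially elementary, and I do not expect a serious obstacle. The only delicate point is that the dominating function is not permitted to be an arbitrary integrable function on $X$ but must decompose as $\sum_i f_i(x_i)$; this is exactly why both hypotheses are used in tandem, as boundedness of the factors $\rho_k$ tames the products so that only the linear growth of $x_{k+1}-x_k$ survives, while the finite-first-moment assumption on each $\mu_i$ is what makes the resulting single-variable dominators $|x_i|$ integrable.
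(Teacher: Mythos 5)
Your proof is correct and follows essentially the same route as the paper: bound $|w|$ via the triangle inequality and boundedness of the $\rho_k$ by $\sum_k C_k(|x_k|+|x_{k+1}|)$, then use the finite first moments of the $\mu_k$ for integrability. Your version merely makes explicit the coordinate-by-coordinate regrouping into $f_i(x_i)=c_i|x_i|$ and the continuity check, which the paper leaves implicit.
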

 \begin{proof}
 $$
   \sum_{k=1}^{n-1}|\rho(x_1,...,x_k)(x_{k+1}-x_k)| \leq\sum_{k=1}^{n-1} C_{k,\rho} |x_{k+1}-x_k|
  \leq\sum_{k=1}^{n-1} C_{k,\rho} \left( |x_{k+1}| + |x_k| \right)
 $$
 $$
 \int C_{k,\rho} |x_k| d \mu_k <\infty \mbox{ for any } k\in\{1,...,n\}
 $$
 The last inequality follows from the fact that each of $\mu_k$ has a finite first moment.
 \end{proof}

 It can be easily proven that:
 \begin{proposition}
 $$
 \Pi_W(\mu_1,...,\mu_n)=\{\pi:\int_\R x_{k+1} d\pi_{x_1,...,x_k}(x_{k+1}) = x_k\}
 $$
 for almost all $x_k$ w.r.t. $(P_{1,...,k})\#(\pi)$, for any $k\in\{1,...,n\}$ where $\pi_{x_1,...,x_k}$ is a conditional measure.
 \end{proposition}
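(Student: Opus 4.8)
The plan is to establish the set equality by disintegrating $\pi$ over its first $k$ coordinates and testing against the bounded continuous factors $\rho$. Since $X=\R^n$ and each $\R^k$ is Polish, for every $k\in\{1,\dots,n-1\}$ the disintegration theorem supplies conditional probability measures $\pi_{x_1,\dots,x_k}$ and the projection $m_k:=(P_{1,\dots,k})_\#\pi$ with $\pi=\int \pi_{x_1,\dots,x_k}\,dm_k$. Because $W$ is the linear span of the terms $\rho(x_1,\dots,x_k)(x_{k+1}-x_k)$ and one may let all but one summand vanish, the constraint $\pi|_W=0$ is equivalent to requiring, separately for each $k$, that $\int_X \rho(x_1,\dots,x_k)(x_{k+1}-x_k)\,d\pi=0$ for every $\rho\in C_b(\R^k)$. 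So it is enough to argue for a single index $k$ and then run over all $k$.

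Next I would introduce the conditional barycenter defect $g_k(x_1,\dots,x_k):=\int_\R (x_{k+1}-x_k)\,d\pi_{x_1,\dots,x_k}$ and check it lies in $L^1(m_k)$: indeed $\int |g_k|\,dm_k\le \int_X |x_{k+1}-x_k|\,d\pi\le \int |x_{k+1}|\,d\mu_{k+1}+\int |x_k|\,d\mu_k<\infty$, where finiteness is exactly the finite--first--moment hypothesis on the marginals. The disintegration identity then gives the key reformulation $\int_X \rho(x_1,\dots,x_k)(x_{k+1}-x_k)\,d\pi=\int_{\R^k}\rho\,g_k\,dm_k$, valid for every bounded continuous $\rho$. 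The forward inclusion is then immediate: if $\pi$ satisfies the martingale identity $\int_\R x_{k+1}\,d\pi_{x_1,\dots,x_k}=x_k$ for $m_k$-almost every $(x_1,\dots,x_k)$, then $g_k=0$ $m_k$-a.e., so the right-hand side vanishes for all $\rho$, whence $\pi|_W=0$ and $\pi\in\Pi_W(\mu)$.

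The reverse inclusion is where the only real work lies. Assuming $\pi\in\Pi_W(\mu)$, the reformulation yields $\int_{\R^k}\rho\,g_k\,dm_k=0$ for every $\rho\in C_b(\R^k)$. I would then consider the finite signed Borel measure $\nu_k:=g_k\,m_k$, finite precisely because $g_k\in L^1(m_k)$, and observe that it annihilates every bounded continuous function. The main obstacle is to pass from this to $\nu_k=0$: for this I would invoke that on a Polish space bounded continuous functions are measure-determining, applied to the Jordan decomposition $\nu_k=\nu_k^+-\nu_k^-$; since $\int \rho\,d\nu_k^+=\int\rho\,d\nu_k^-$ for all $\rho\in C_b(\R^k)$, the two finite positive measures coincide and $\nu_k=0$. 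Hence $g_k=0$ $m_k$-a.e., which is exactly the martingale identity for the index $k$. Carrying this out for every $k\in\{1,\dots,n-1\}$ establishes both inclusions and proves the proposition.
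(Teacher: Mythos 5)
Your proposal is correct and follows essentially the same route as the paper: disintegrate $\pi$ over the first $k$ coordinates and test the constraint against all $\rho\in C_b$, which is exactly the paper's chain of equivalences. The only difference is that you explicitly justify the one nontrivial step the paper asserts without comment — that $\int \rho\, g_k\, dm_k=0$ for all bounded continuous $\rho$ forces $g_k=0$ $m_k$-a.e. — via integrability of $g_k$ and the measure-determining property of $C_b$ on a Polish space, which is a welcome filling-in of detail rather than a different argument.
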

 \begin{proof}
  Indeed
  \begin{multline*}
  \int_\R x_{k+1} d\pi_{x_1,...,x_k}(x_{k+1}) = x_k \mbox{ for almost all } x_k \mbox{ w.r.t. } (P_{1,...,k})\#(\pi) \iff
  \\
  \iff \int \rho(x_1,...,x_k) \int x_{k+1} d\pi_{x_1,...,x_k}(x_{k+1}) d\mu_{k+1} = \\
  \int x_k \rho(x_1,...,x_k) d\mu_{k+1} \mbox{ for any } \rho \in C_b(X_1\times...\times X_k) \iff
  \\
  \iff \int \rho(x_1,...,x_k)(x_{k+1}-x_k) d\pi \mbox{ for any } \rho \in C_b(X_1\times...\times X_k)
  \end{multline*}
  Since the continuity is obvious, the proof is complete.
 \end{proof}

 This problem is called the \textit{martingale Kantorovich problem}.
 Note, that existence of the solution for this problem is not guaranteed.

 By substitution of particular form of $W$ in the general duality statement \ref{Kantorovich duality} we directly obtain the duality in the following form:
 \begin{theorem}
  In the martingale setting the following formulation of Kantorovich duality holds:
  \begin{multline*}
  \inf_{\pi\in \Pi_W}{\int_{R^n}}{c(x_1,...,x_n) d\pi}=\\
  =\sup \left\{\sum_{k=1}^n\int_{\R}{\phi(x_k) d\mu_k}: \sum_{k=1}^n\phi_k(x_k) + \sum_{k=1}^{n-1}\rho(x_1,...,x_k)(x_{k+1}-x_k) \leq c\right\}.
  \end{multline*}
  where $(\phi_k)_{k=1}^n \in (C_L(\mu_k))_{k=1}^n$, $\rho(x_1,...,x_k) \in C_b(\R^n)$.
 \end{theorem}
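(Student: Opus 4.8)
The plan is to obtain this statement as an immediate specialization of the general Kantorovich duality theorem \ref{Kantorovich duality}, since all the structural facts needed have already been assembled in this section. First I would confirm that the hypotheses of Theorem \ref{Kantorovich duality} are in force: each $X_k=\R$ is Polish, each $\mu_k\in\PP(\R)$ has the standing finite first moment, the subspace $W$ defined above satisfies $W\subset C_L(\mu)$ by the Proposition proved just above, and the cost $c$ is assumed to lie in $C_L(\mu)$ (this is the implicit standing assumption of the martingale setting). With these in place, Theorem \ref{Kantorovich duality} applies verbatim and gives
$$
\inf_{\pi\in \Pi_W}{\int_{\R^n} c\, d\pi}=\sup_{f + \omega \leq c}{\sum_{k=1}^n \int_{\R}{f_k(x_k)\, d\mu_k}},
$$
where $f=\sum_{k=1}^n f_k\in F=\bigoplus_{k=1}^n C_L(\mu_k)$ and $\omega\in W$.

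The second step is purely a change of parametrization on the right-hand side. By the very definition of $W$, an element $\omega\in W$ is nothing but a sum $\sum_{k=1}^{n-1}\rho_k(x_1,\ldots,x_k)(x_{k+1}-x_k)$ with each $\rho_k\in C_b$, so ranging over $\omega\in W$ is the same as ranging over all such families $(\rho_k)_{k=1}^{n-1}$. Relabelling $f_k$ as $\phi_k$ and substituting this explicit form of $\omega$ into the constraint $f+\omega\le c$ turns it into
$$
\sum_{k=1}^n \phi_k(x_k) + \sum_{k=1}^{n-1} \rho_k(x_1,\ldots,x_k)(x_{k+1}-x_k) \le c,
$$
which is exactly the constraint appearing in the claimed identity, while the objective $\sum_k\int\phi_k\, d\mu_k$ is left untouched. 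This yields the asserted duality.

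The one point deserving genuine care — rather than a real obstacle — is to check that the abstract supremum over $\omega\in W$ and the concrete supremum over the parametrizing functions $\rho_k$ coincide, i.e. that the assignment $(\rho_k)\mapsto\omega$ is onto $W$; this is immediate from how $W$ was defined. I would also invoke the earlier remark allowing one to replace $W$ by its $\|\cdot\|_L$-closure, which guarantees that nothing is lost in passing between $W$ and the explicit functions realizing its elements. Beyond this bookkeeping there is no additional analytic content: the compactness, continuity, and minimax ingredients have all been discharged once and for all in the proof of Theorem \ref{Kantorovich duality}, so the present statement is genuinely a corollary obtained by direct substitution.
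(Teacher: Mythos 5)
Your proposal is correct and matches the paper's own treatment: the paper proves this theorem precisely by substituting the explicit form of $W$ into the general duality statement of Theorem \ref{Kantorovich duality}, with membership $W\subset C_L(\mu)$ supplied by the preceding proposition. Your extra bookkeeping (surjectivity of $(\rho_k)\mapsto\omega$ and the closure remark) is harmless but not needed beyond what the paper already records.
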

 This result was obtained by Beiglboeck, Henry-Labordere and Penkner in \cite{Beigl3}.
 Also they show (Prop. 4.1 in \cite{Beigl3}) that optimal value of the dual problem in this setting is not attained in general.

\section{Invariant Monge-Kantorovich problem}
 \label{Continuous invariant setting}

Let $G$ be some group acting continuously on $X_1,..., X_n$.
Suppose additionally that an action of $G$ is defined on $X$, and it is diagonal
$$g(x_1,...,x_n)=(g_1(x_1),...,g_n(x_n))$$
Define subspace $W$ as a subspace of $C_b$
$$W := \operatorname{span} \{h \circ g - h: g \in G, h \in C_b(\mu)\}$$
where 'span' is for the space of finite linear combinations.

One can obtain the following characterization of measures which are vanishing on $W$:
\begin{proposition}
 For any $\pi \in \PP(X)$ $\pi|_W=0$ if and only if $\pi$ is an invariant measure.
\end{proposition}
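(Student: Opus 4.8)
The plan is to unwind the condition $\pi|_W=0$ into a statement indexed by single group elements and then to recognize it, via the pushforward formula, as the definition of invariance. Everything in the chain will be an equivalence, so both directions of the ``if and only if'' are handled at once.

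First I would reduce to the generators of $W$. Since $W$ is by definition the linear span of the functions $h\circ g-h$ and the functional $\omega\mapsto\int\omega\,d\pi$ is linear, the requirement $\int\omega\,d\pi=0$ for all $\omega\in W$ is equivalent to its holding on each generator, i.e.
$$
\int_X (h\circ g-h)\,d\pi=0 \qquad \mbox{for all } g\in G,\ h\in C_b(X).
$$
Each such generator is genuinely an admissible test function: because $G$ acts by homeomorphisms, $h\circ g$ is again bounded continuous, so $h\circ g-h\in C_b(X)\subset C_L(\mu)$ and all the integrals are finite.

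Next I would apply the change-of-variables formula for the pushforward, $\int_X h\circ g\,d\pi=\int_X h\,d(g_\#\pi)$, which turns the displayed identity into
$$
\int_X h\,d(g_\#\pi)=\int_X h\,d\pi \qquad \mbox{for all } h\in C_b(X),
$$
separately for each fixed $g\in G$. At this point I invoke the standard fact that on a Polish space the bounded continuous functions separate Borel probability measures: if two such measures assign the same integral to every $h\in C_b(X)$, they are equal. Applying this with the fixed $g$ gives $g_\#\pi=\pi$, and letting $g$ range over $G$ yields precisely that $\pi$ is $G$-invariant. Reading the equivalences backwards establishes the converse.

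The argument is essentially a string of reformulations, so I do not expect a serious obstacle. The only points needing care are the (immediate) reduction from $W$ to its generators by linearity, and the appeal to the separation of Borel probability measures by $C_b(X)$, which is exactly where the Polish hypothesis on $X$ is used.
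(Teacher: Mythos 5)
Your proof is correct and follows essentially the same route as the paper's: both unwind $\pi|_W=0$ by linearity to the generators $h\circ g-h$, apply the change-of-variables formula $\int h\circ g\,d\pi=\int h\,d(g_\#\pi)$, and conclude $g_\#\pi=\pi$ from the fact that bounded continuous test functions determine Borel probability measures, with every step an equivalence. The only cosmetic difference is that you make the separation-of-measures step explicit (and correctly restrict to $C_b(X)$, matching the definition of $W$), whereas the paper writes the same chain starting from invariance and with test functions loosely taken in $C_L(\mu)$.
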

\begin{proof}
\begin{multline*}
 \pi \mbox{ is invariant } \iff \int{ h d g_\#\pi}=\int {h d \pi} \mbox{ }\forall h\in C_L(\mu), \forall g\in G \iff\\
 \iff \int{ h \circ g d \pi}=\int {h d \pi} \mbox{ }\forall h\in C_L(\mu), \forall g\in G \iff\\
 \iff \int{ h \circ g - h d \pi}=0 \mbox{ }\forall h\in C_L(\mu), \forall g\in G \iff \pi|_W=0
\end{multline*}
\end{proof}
If $W$ is defined in such way, we will refer to the problem:
 $$
 \inf_{\pi \in \Pi_W(\mu)}\left\{\int_{X} c(x)d\pi\right\}
 $$
as the \textit{invariant Kantorovich problem}.
Due to diagonality of the action of $G$ we have that $f\circ \operatorname{Pr}_k\circ g = f \circ g_k \circ \operatorname{Pr}_k$ and
$$
F\cap W=\bigoplus_{k=1}^n\{f\in C_L(\mu_k): f\circ g_k\circ P_k - f \circ P_k \in W \}
$$
It means that the necessary condition for invariant problem to have a solution is
the invariance of given marginal measures $\mu_k$ with respect to the action of $G$ on $X_k$.
It can be proved that such condition is also sufficient.
\begin{proposition}
Invariant Kantorovich problem with invariant marginals has a solution.
\end{proposition}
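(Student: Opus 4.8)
The plan is to reduce everything to the existence criterion established at the end of Section 1, which asserts that the Kantorovich problem with the linear constraint $\pi|_W=0$ admits a minimizer as soon as the admissible set $\Pi_W(\mu)$ is nonempty. Thus it suffices to exhibit a single invariant transport plan with the prescribed marginals; by the characterization proposition above, producing such a plan is the same as producing a measure $\pi\in\PP(X)$ with $(\operatorname{Pr}_k)_\#\pi=\mu_k$ for every $k$ and $\pi|_W=0$.

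First I would observe that the product measure already does the job. Set $\pi_0:=\mu_1\otimes\cdots\otimes\mu_n$. Its marginals are the $\mu_k$ by construction, so $\pi_0\in\Pi(\mu)$. The crucial point is that, because the action of $G$ on $X$ is diagonal, the pushforward of a product under $g=(g_1,\dots,g_n)$ is again a product: $g_\#\pi_0=(g_1)_\#\mu_1\otimes\cdots\otimes(g_n)_\#\mu_n$. Invariance of the marginals, $(g_k)_\#\mu_k=\mu_k$, then gives $g_\#\pi_0=\pi_0$ for every $g\in G$, so $\pi_0$ is $G$-invariant.

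Next I would translate invariance of $\pi_0$ into the vanishing condition defining $\Pi_W$. For any generator $h\circ g-h$ of $W$ (with $h\in C_b$, $g\in G$) one has $\int(h\circ g-h)\,d\pi_0=\int h\,d(g_\#\pi_0)-\int h\,d\pi_0=0$, since $g_\#\pi_0=\pi_0$; by linearity the same holds on all of $W$. Hence $\pi_0\in\Pi_W(\mu)$, so this set is nonempty, and the existence criterion yields a minimizer.

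I do not expect a genuine obstacle here: the whole argument hinges on the single observation that diagonality of the action makes the product of invariant marginals invariant, which sidesteps any need to ``average'' a plan over $G$ (a device that would in any case require compactness of $G$ and a Haar measure). The only points deserving care are checking that the hypotheses of the Section 1 existence criterion apply --- that $c\in C_L(\mu)$, so that $\pi\mapsto\int c\,d\pi$ is weakly continuous by Corollary \ref{continuity of functional}, and that $\Pi_W(\mu)$ is weakly compact as a closed subset of the compact set $\Pi(\mu)$ --- and that the equivalence between invariance and $\pi|_W=0$ is invoked only in the direction verified directly above.
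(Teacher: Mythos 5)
Your proof is correct and takes essentially the same route as the paper: both exhibit the product measure $\mu_1\otimes\cdots\otimes\mu_n$ as an element of $\Pi_W(\mu)$ and then conclude via the Section 1 existence criterion (compactness of $\Pi_W(\mu)$ plus continuity of $\pi\mapsto\int c\,d\pi$). The only difference is presentational: the paper checks $\int(h\circ g-h)\,d(\mu_1\otimes\cdots\otimes\mu_n)=0$ by a coordinate-by-coordinate telescoping Fubini computation using invariance of each $\mu_k$, while you package the same computation as the identity $g_\#(\mu_1\otimes\cdots\otimes\mu_n)=(g_1)_\#\mu_1\otimes\cdots\otimes(g_n)_\#\mu_n$ for the diagonal action.
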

\begin{proof}
It is enough to show that $\mu \otimes \nu \in \Pi_W(\mu,\nu)$.
For any $h \in W$:
\begin{multline*}
\int(h\circ g - h) d(\otimes_{k=1}^n\mu_k) = \int_{X_2\times\dots\times X_n}\left(\int_{X_1} h(g(x))-h(x) d\mu_1(x_1)\right)d(\otimes_{k = 2}^n\mu_k)=\\
=\int_{X_2\times\dots\times X_n}\biggl(\int_{X_1} (h(g(x))-h(g_1(x_1),x_2,...x_n))+\\
+(h(g_1(x_1),x_2,...,x_n)-h(x)) d\mu_1(x_1)\biggr) d(\otimes_{k = 2}^n\mu_k)=\\
=\int_{X_2\times\dots\times X_n}\left(\int_{X_1} h(g(x))-h(g_1(x_1),x_2,...x_n) d\mu_1(x_1)\right)d(\otimes_{k = 2}^n\mu_k)=\\
=\int_{X_1}\left(\int_{X_2\times\dots\times X_n} h(g(x))-h(g_1(x_1),x_2,...x_n) d\otimes_{k = 2}^n\mu_k\right)d\mu_1(x_1)=...\\
...=\int_{X_1}\int_{X_2}\dots\int_{X_n} (h(g(x))-h(g_1(x_1),...,g_{n-1}(x_{n-1},x_n)) d\mu_n(x_n)...d\mu_1(x_1) = 0
\end{multline*}
\end{proof}
\begin{example}
Let $k=2$, $X_1=X_2=\R^{\N}$ be the direct product of countable number of $\R^1$, $c(x,y)=|x_1-y_1|^2$, $G=S^\infty$ be a group of finite permutations acting
by permutations of coordinates. Measures which are invariant with respect to such permutations are called 'exchangeable'.
By the general theory the optimal plan in the set of exchangeable transport plans exists if and only if marginal measures are also exchangeable. d
A detailed description of the solutions to this problem can be found in \cite{Kol-zaev1}.
\end{example}

One can choose a topology for the group $G$ in such way that the functional $(f,x): G \rightarrow \R$, $g \rightarrow (f\circ g)(x)$ is
Borel measurable for any pair $(f,x)\in C_b(X)\times X$.
If the topological group $G$ is \textbf{compact} it is possible to define a transform
$$\bar{f}(x):=\int_G{(f\circ g)(x) d\chi(g)}$$
where $f\in C_b(X)$ and $\chi$ is the left-invariant probability Haar measure on the group $G$.
It is easy to check that $\bar{f}\in C_b(X)$: the integrand is continuous with respect to $x\in X$ and bounded with respect to both $x\in X$
and $g\in G$, which implies the continuity of $\bar{f}(x)$.

Define a new subspace of $C_b$
$$
W_1=\operatorname{span} \{w\in W, f-\int_G f\circ g d\chi(g) \}
$$
We are going to prove that for our problem there is no difference between $W$ and $W_1$.
\begin{proposition}
 $$
 \pi|_{W_1}=0 \iff \pi|_W=0
 $$
\end{proposition}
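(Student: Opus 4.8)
The plan is to prove the two implications separately: the forward direction is immediate, and all the content lies in the converse. First I would observe that $W \subseteq W_1$ directly from the definition of $W_1$, which is the span of a set containing every $w \in W$. Consequently, a measure $\pi$ that annihilates every element of $W_1$ certainly annihilates every element of $W$, so $\pi|_{W_1}=0 \Rightarrow \pi|_W=0$ comes for free.

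For the converse, suppose $\pi|_W=0$. By the previous proposition this is equivalent to $\pi$ being $G$-invariant, and this is the fact I would exploit. Since $W_1 = W + \operatorname{span}\{f-\bar f : f\in C_b(X)\}$ and $\pi$ already kills $W$ by hypothesis, it suffices (by linearity) to show that $\int(f-\bar f)\,d\pi = 0$ for every $f\in C_b(X)$. The heart of the matter is the computation
$$
\int_X \bar f(x)\,d\pi(x) = \int_X \int_G (f\circ g)(x)\,d\chi(g)\,d\pi(x) = \int_G \left(\int_X f\circ g\,d\pi\right) d\chi(g),
$$
where the interchange of the order of integration is the crucial step.

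The main obstacle is justifying this Fubini-type interchange. Since $f\in C_b(X)$, the integrand $(g,x)\mapsto (f\circ g)(x)$ is bounded, and by the standing measurability assumption on the action (that $g\mapsto (f\circ g)(x)$ is Borel measurable) it is jointly measurable; together with finiteness of $\chi$ and $\pi$ this licenses the interchange. Once past it, invariance of $\pi$ gives $\int_X f\circ g\,d\pi = \int_X f\,d\pi$ for every $g\in G$, so the inner integral is the constant $\int_X f\,d\pi$; because $\chi$ is a probability measure, $\int_G\left(\int_X f\,d\pi\right)d\chi(g) = \int_X f\,d\pi$. Hence $\int_X \bar f\,d\pi = \int_X f\,d\pi$, so $\int(f-\bar f)\,d\pi = 0$, and extending by linearity over the generators of $W_1$ completes the converse and the proof.
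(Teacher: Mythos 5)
Your proof is correct and follows essentially the same route as the paper's: the forward implication from the inclusion $W \subseteq W_1$, and the converse by using the equivalence of $\pi|_W=0$ with $G$-invariance, interchanging the order of integration against the Haar measure, and concluding $\int \bar f\,d\pi = \int f\,d\pi$. Your explicit attention to the Fubini justification and to the reduction to the generators $f-\bar f$ only makes precise steps the paper leaves implicit.
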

\begin{proof}
 The implication $\pi|_{W_1}=0 \Rightarrow \pi|_W=0$ is obvious. Prove the opposite one. $\pi|_W=0$ implies that
 $\int_X h\circ g d\pi=\int_X h d\pi$ for any $h \in C_b(X)$, $g \in G$. Then
$$
 \int_X \left( \int_G h\circ g d\chi(g)\right) d\pi = \int_G \left( \int_X h\circ g d\pi\right) d\chi(g)
 =\int_G \left( \int_X h d\pi\right) d\chi(g)=\int_X h d\pi
$$
 Hence
 $$
 \int_X \left( h-\int_G h\circ g d\chi(g)\right) d\pi =0
 $$
 for any $h \in C_b(X)$.
\end{proof}

Let $\overline{W}_1 \in C_b(X)$ be the uniform closure of $W_1$.
Since uniform topology is stronger than $L^1(\pi)$ topology for any $\pi \in \Pi$, it is true that for our problem
there is no difference between $W_1$ and $\overline{W}_1$:
$\pi|_{\overline{W}_1}=0 \iff \pi|_{W_1}=0$ and $\Pi_{W_1}=\Pi_{\overline{W}_1}$.

In the case topological group $G$ is \textbf{compact} it is possible to define a linear operator
$\operatorname{Pr}_{\overline{W}_1}: C_b(X) \rightarrow \overline{W}_1$ as:
$$
\operatorname{Pr}_{\overline{W}_1}(f):=f-\int_G{(f\circ g) d\chi(g)}
$$
where $\chi$ is the left-invariant probability Haar measure on the group $G$
\begin{proposition}
\label{Invariant projection}
If $G$ is a compact group, then the linear operator $\operatorname{Pr}_{\overline{W}_1}$ is a continuous projection on the space $\overline{W}_1$.
\end{proposition}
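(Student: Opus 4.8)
The plan is to verify the three defining features of a continuous projection onto $\overline{W}_1$: boundedness in the uniform norm (which is the topology defining $\overline{W}_1$), that the range is contained in $\overline{W}_1$, and that $\operatorname{Pr}_{\overline{W}_1}$ restricts to the identity on $\overline{W}_1$; the last two together force idempotency and surjectivity onto $\overline{W}_1$. Linearity is immediate from the formula. Throughout I would write $Af:=\int_G (f\circ g)\,d\chi(g)$ for the averaging operator, so that $\operatorname{Pr}_{\overline{W}_1}=\mathrm{Id}-A$, and I would use the already-established facts that $Af\in C_b(X)$ and that the relevant integrals are well-defined.

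First I would establish continuity. Since $\chi$ is a probability measure, $\|Af\|_\infty=\sup_x|\int_G f(gx)\,d\chi(g)|\le\sup_x\int_G|f(gx)|\,d\chi(g)\le\|f\|_\infty$, so $A$ is a contraction and $\operatorname{Pr}_{\overline{W}_1}=\mathrm{Id}-A$ is bounded (operator norm at most $2$) with respect to the uniform norm. The range condition is built into the definition of $W_1$: for any $f\in C_b(X)$ the element $\operatorname{Pr}_{\overline{W}_1}(f)=f-Af$ is exactly one of the spanning elements of $W_1$, hence lies in $W_1\subseteq\overline{W}_1$.

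The heart of the argument is the identity $\operatorname{Pr}_{\overline{W}_1}(w)=w$ for $w\in\overline{W}_1$, for which it suffices, by continuity together with the density of $W_1$ in $\overline{W}_1$, to show that $A$ annihilates the generators of $W_1$. The key lemma is that $Af$ is $G$-invariant for every $f$: using that a compact group is unimodular, so the left-invariant Haar measure $\chi$ is also right-invariant, the substitution $g\mapsto gg_0$ gives $Af(g_0x)=\int_G f(gg_0x)\,d\chi(g)=\int_G f(gx)\,d\chi(g)=Af(x)$. From this the two generator computations follow. For $w=h\circ g_0-h\in W$, left-invariance of $\chi$ gives $\int_G h(g_0gx)\,d\chi(g)=\int_G h(gx)\,d\chi(g)$, so $Aw=0$. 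For $w=f-Af$, invariance of $Af$ yields $A(Af)(x)=\int_G Af(gx)\,d\chi(g)=\int_G Af(x)\,d\chi(g)=Af(x)$, i.e. $A$ is idempotent, whence $Aw=Af-A(Af)=0$. Thus $\operatorname{Pr}_{\overline{W}_1}$ fixes $W_1$ pointwise and, by continuity, all of $\overline{W}_1$.

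The main obstacle is the invariance lemma for $Af$: it is the one place where compactness of $G$ is genuinely used, through unimodularity, in order to pass from left- to right-invariance of $\chi$, and one must keep careful track of which translation invariance is invoked in each of the two generator computations (left-invariance for $w\in W$, right-invariance for the invariance of $Af$). Once that is in place, idempotency $\operatorname{Pr}_{\overline{W}_1}^2=\operatorname{Pr}_{\overline{W}_1}$ follows formally from $\operatorname{Pr}_{\overline{W}_1}(f)=f-Af\in\overline{W}_1$ and $\operatorname{Pr}_{\overline{W}_1}|_{\overline{W}_1}=\mathrm{Id}$, and all the projection properties assemble.
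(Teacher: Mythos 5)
Your proof is correct and follows essentially the same route as the paper's: continuity via the sup-norm contraction property of the averaging operator $A$, range containment directly from the definition of $W_1$, and the identity on $\overline{W}_1$ verified on the two families of generators and extended by linearity and density. The only difference is presentational: you isolate the $G$-invariance of $Af$ (hence idempotency of $A$), justified by bi-invariance of Haar measure on a compact group, as an explicit lemma, whereas the paper performs the same computation without remarking on the left- versus right-invariance distinction.
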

\begin{proof}

First prove the continuity of the operator with respect to uniform topology on $C_b(X)$.
Let $\sup_{x \in X}|f(x)-h(x)|<\frac{\varepsilon}{2}$ for some fixed $\varepsilon>0$, then
\begin{multline*}
 \sup_{x\in X}|\operatorname{Pr}_{\overline{W}_1}(f-h)(x)|=\sup_{x\in X}|f(x)-h(x)+\int_G{((f-h)\circ g)(x) d\chi(g)}|\leq \\
 \leq \sup_{x\in X}|f(x)-h(x)|+\sup_{x\in X}|\int_G{(f-h)(g(x)) d\chi(g)}| \leq \frac{\varepsilon}{2} + \sup_{x\in X}\int_G{|(f-h)(g(x))| d\chi(g)}\leq\\
 \leq \frac{\varepsilon}{2}+\sup_{x\in X}\int_G{\frac{\varepsilon}{2} d\chi(g)}=\varepsilon
\end{multline*}
Thus the continuity is established.

It is obvious that for any $h \in C_b(X)$ $\operatorname{Pr}_{\overline{W}_1}(h) = h-\int_G{(f\circ g) d\chi(g)} \in \overline{W}_1$.
So to prove that $\operatorname{Pr}_{\overline{W}_1}$ is a projection we only need to show that
$\operatorname{Pr}_{\overline{W}_1}(h)=h$ for any $h\in \overline{W}_1$. If $h=f\circ g_0 -f $ then
$$
\operatorname{Pr}_{\overline{W}_1}(f\circ g_0 - f)=f\circ g_0 - f - \int_G{(f\circ g_0 \circ g) d\chi(g)} +\int_G{(f\circ g) d\chi(g)} = f\circ g_0 - f
$$
If $h= f-\int_G{(f\circ g) d\chi(g)}$
\begin{multline*}
\operatorname{Pr}_{\overline{W}_1}(f-\int_G{(f\circ g) d\chi(g)})=f-\int_G{(f\circ g) d\chi(g)} - \int_G{(f\circ g) d\chi(g)} +\\
+\int_G{\left(\int_G{(f\circ g) d\chi(g)}\right)\circ g) d\chi(g)}= f\circ g - \int_G{(f\circ g) d\chi(g)}
\end{multline*}
Using the linearity we obtain that $\operatorname{Pr}_{\overline{W}_1}=\operatorname{Id}$ on $W_1$.
Due to continuity the operator uniquely extends from a subspace
to its closure, so $\operatorname{Pr}_{\overline{W}_1}=\operatorname{Id}$ on $\overline{W}_1$, which concludes the statement.
\end{proof}

\subsection{Kantorovich duality in the invariant setting}
  The Kantorovich duality relation for the invariant problem can be obtained via substitution of an appropriate subspace $W$ in
  the general equality \ref{bounded Kantorivich duality}.
  But in the case of compact group $G$ this result can be formulated in a more precise way.

  Denote by $\hat{h}:=\operatorname{Pr}_{\overline{W}_1}(h)$ the continuous projection of $h \in C_b(X)$ on $\overline{W}_1$,
  $$\bar{h}:=(Id-\operatorname{Pr}_{\overline{W}_1})(h)=h-\hat{h}=\int_G{(h\circ g) d\chi(g)}$$
  Let $V$ be the image of $C_b(X)$ with respect to this projection. We also have that $C_b(X)=\overline{W}_1\oplus V$ in the sense of direct sum of Banach spaces.
  The same notation will be used for functions $f_k \in C_b(X_k)$ or $f \in F$.
  The role of $\overline{W}_1$ in these cases will be played by the uniform closures of $W_1 \cap C_b(X_k)$ and $W_1 \cap F$ respectively.

  \begin{theorem}
   For the invariant problem with the compact group $G$ and $c \in C_b(X)$ the following formulation of the Kantorovich duality holds if all marginal measures $\mu_k$ are invariant:
    $$
    \inf_{\pi\in \Pi_W}{\int_{X}{c(x) d\pi}}=
    \sup_{f \leq \bar{c}}{\left(\sum_{k=1}^n\int{f_k(x) d\mu_k}\right)}=
    \sup_{\bar{f} \leq \bar{c}}{\left(\sum_{k=1}^n\int{f_k(x) d\mu_k}\right)}
    $$
    where $f \in F=\bigoplus_{k=1}^n C_b(X_k)$.
  \end{theorem}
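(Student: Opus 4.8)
The plan is to deduce the statement from the bounded Kantorovich duality (Theorem~\ref{bounded Kantorivich duality}) after first replacing the cost $c$ by its group average $\bar c=\int_G (c\circ g)\,d\chi(g)$. The key preliminary remark is that every admissible plan is invariant, so that $\int c\,d\pi=\int\bar c\,d\pi$ for all $\pi\in\Pi_W$: by Fubini's theorem (the map $(g,x)\mapsto c(g(x))$ is bounded and measurable, and $\chi$ is a probability measure since $G$ is compact) together with the invariance characterization of admissible plans,
$$
\int_X \bar c\,d\pi = \int_G\!\Big(\int_X c\circ g\,d\pi\Big)d\chi(g) = \int_G\!\Big(\int_X c\,d\pi\Big)d\chi(g) = \int_X c\,d\pi .
$$
Hence $\inf_{\pi\in\Pi_W}\int c\,d\pi=\inf_{\pi\in\Pi_W}\int\bar c\,d\pi$, and applying Theorem~\ref{bounded Kantorivich duality} to the bounded cost $\bar c$ gives
$$
\inf_{\pi\in\Pi_W}\int c\,d\pi = \sup_{f+\omega\le\bar c}\ \sum_{k=1}^n\int f_k\,d\mu_k ,\qquad f\in F,\ \omega\in W .
$$

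Next I would identify this $\omega$-dual with the right-hand supremum $\sup_{\bar f\le\bar c}\sum_k\int f_k\,d\mu_k$. For the inequality ``$\le$'', take any feasible pair $f+\omega\le\bar c$ and apply the averaging operator $h\mapsto\bar h=(Id-\operatorname{Pr}_{\overline{W}_1})(h)$, which is order preserving and fixes the already invariant function $\bar c$; since $\omega\in W\subset\overline{W}_1$, Proposition~\ref{Invariant projection} yields $\bar\omega=\omega-\operatorname{Pr}_{\overline{W}_1}(\omega)=0$, so that $\bar f\le\bar c$ while the objective $\sum_k\int f_k\,d\mu_k$ is untouched. For ``$\ge$'', given $f$ with $\bar f\le\bar c$ I use the pair $(\bar f,0)$: it is feasible because $\bar f=\sum_k\bar f_k\in F$ (each $\bar f_k\in C_b(X_k)$ by the same continuity--boundedness argument used for $\bar h$) and $\bar f\le\bar c$, and its value equals $\sum_k\int\bar f_k\,d\mu_k=\sum_k\int f_k\,d\mu_k$ because every marginal $\mu_k$ is invariant. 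This proves the second equality of the theorem.

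It remains to match the middle and right-hand suprema, i.e.\ $\sup_{f\le\bar c}=\sup_{\bar f\le\bar c}$. The feasible set of the former is contained in that of the latter, since $f\le\bar c$ implies $\bar f\le\bar c$ by monotonicity of the averaging (the invariant function $\bar c$ being its own average); this gives ``$\le$''. Conversely, a competitor $f$ with $\bar f\le\bar c$ may be replaced by $\bar f$ itself, which satisfies $\bar f\le\bar c$ and realizes the same value $\sum_k\int\bar f_k\,d\mu_k$, again by invariance of the marginals; this gives ``$\ge$''. Combining the three steps, all three quantities coincide.

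The calculations above are routine, and I do not expect a genuine obstacle beyond two points that require care: the interchange of integrals in the reduction to $\bar c$, which is exactly where compactness of $G$ (finiteness and invariance of the Haar measure $\chi$) and the measurability of $(g,x)\mapsto(f\circ g)(x)$ are used, and the bookkeeping observation that both the constraint and the objective depend on $f$ only through its average $\bar f$, so that replacing $f$ by $\bar f$ never changes the value being optimized. Everything else rests on the already established duality (Theorem~\ref{bounded Kantorivich duality}) and on the properties of the projection $\operatorname{Pr}_{\overline{W}_1}$.
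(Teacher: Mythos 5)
Your proof is correct, but it takes a genuinely different route from the paper's. You first pass to the averaged cost on the primal side --- observing that every $\pi\in\Pi_W$ is invariant (equivalently, $c-\bar c\in W_1$ and $\Pi_W=\Pi_{W_1}$, which is exactly the content of the paper's unnumbered propositions in Section 5), so that $\inf_{\Pi_W}\int c\,d\pi=\inf_{\Pi_W}\int\bar c\,d\pi$ --- and only then invoke Theorem~\ref{bounded Kantorivich duality}, applied to the invariant cost $\bar c$. After that, eliminating $\omega$ is a one-line matter: the Haar average is monotone, fixes the invariant function $\bar c$, and annihilates $W$ by Proposition~\ref{Invariant projection}. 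The paper instead applies Theorem~\ref{bounded Kantorivich duality} to $c$ itself and does all the work on the dual side: it decomposes $f=\bar f+\hat f$ and $c=\bar c+\hat c$ along $C_b(X)=\overline{W}_1\oplus V$, collects the residue $\tilde c=\hat c-\omega-\hat f\in\overline{W}_1$, and uses the pointwise orbit estimate $\inf_{g\in G}(\tilde c\circ g)(x)\le 0$, valid for all elements of $\overline{W}_1$, to show the constraint collapses to $\bar f\le\bar c$. Your route is more elementary and sidesteps two delicate points of the paper's argument (the assertion that $\tilde c$ ranges over essentially arbitrary elements of $\overline{W}_1$, and the orbit-infimum lemma), at the price of a Fubini interchange in the primal reduction --- which, as you note, needs joint measurability of $(g,x)\mapsto c(g(x))$ and compactness of $G$, precisely the standing hypotheses the paper itself uses to prove $\Pi_W=\Pi_{W_1}$; both proofs also implicitly use bi-invariance of the Haar measure (automatic for compact $G$) when asserting that averaged functions are invariant. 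One cosmetic remark: what you call ``the second equality of the theorem'' is really the equality of the infimum with the third expression $\sup_{\bar f\le\bar c}$; together with your final step identifying $\sup_{f\le\bar c}$ with $\sup_{\bar f\le\bar c}$ (again via monotonicity of the averaging and invariance of the $\mu_k$), all three quantities coincide, so the theorem is fully proved.
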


  \begin{proof}
   From \ref{bounded Kantorivich duality} we have:
   $$
   \inf_{\pi\in \Pi_W}{\int c d\pi}=\sup_{f + \omega \leq c}{\sum_{k=1}^n \int_{X_k}{f_k(x_k) d\mu_k}}.
   $$
   where $f \in F$, $\omega \in W$.
   \begin{multline*}
   \sup_{f + \omega \leq c}{\sum_{k=1}^n \int_{X_k}{f_k(x_k) d\mu_k}}=\\
   =\sup_{f, \omega}\left(\sum_{k=1}^n \int_{X_k}{f_k(x_k) d\mu_k}: \hspace{5pt} \bar{f}+\hat{f}+\omega \leq \bar{c}+\hat{c}\right)=\\
   =\sup_{\bar{f}, \hat{f}, \omega}\left(\sum_{k=1}^n \int_{X_k}{f_k(x_k) d\mu_k}: \hspace{5pt} \bar{f} \leq \bar{c}+(\hat{c}-\omega-\hat{f})\right)
   \end{multline*}
   Note that the maximizing functional doesn't depend on $\overline{W}_1$-part of $f$, thus we can choose $\hat{f}$ arbitrary.
   Hence $\tilde{c}:=\hat{c}-\hat{\omega}-\hat{f}$ is just an arbitrary function from $\overline{W}_1$.
   Inequality $\bar{f}(x) \leq \bar{c}(x)+\tilde{c}(x)$ holds pointwise, so acting on it by an arbitrary element $g \in G$ we obtain:
   $$
   (\bar{f}\circ g)(x) \leq (\bar{c}\circ g+\tilde{c}\circ g)(x) \iff
   \bar{f}(x) \leq \left(\bar{c}+\tilde{c}\circ g\right)(x)
   $$
   for any fixed $x \in X$.
   Thus:
   $$
   \bar{f}(x) \leq (\bar{c}+\tilde{c})(x) \iff \bar{f}(x) \leq \left(\bar{c}+\inf_{g \in G}{(\tilde{c}\circ g)}\right)(x)
   $$
   It can be obtained from the definition of $\overline{W}_1$, that $\inf_{g \in G}{(\tilde{c}\circ g)(x)} \leq 0$ for each fixed point $x$.
   For the elements of $W_1$ it is obvious, and since the uniform convergence implies the pointwise one, it is also true for the elements of the closure.
   Hence the supremum is reached at $\tilde{c} \equiv 0$.
   Finally, we obtain the desired statement:
   \begin{multline*}
   \sup_{\bar{f}, \hat{f}, \omega}\left(\sum_{k=1}^n \int_{X_k}{f_k(x_k) d\mu_k}: \hspace{5pt} \bar{f} \leq \bar{c}+(\hat{c}-\omega-\hat{f})\right)=\\
   =\sup_{\bar{f}, \hat{f}, \omega}\left(\sum_{k=1}^n \int_{X_k}{f_k(x_k) d\mu_k}: \hspace{5pt} \bar{f}\leq \bar{c}+\inf_{g \in G}{((\hat{c}-\omega-\hat{f})\circ g)}\right)=\\
   =\sup_{\bar{f}, \hat{f}}\left(\sum_{k=1}^n \int_{X_k}{f_k(x_k) d\mu_k}: \hspace{5pt} \bar{f} \leq \bar{c}+\inf_{g \in G}(\hat{f}\circ g)\right)=\\
   =\sup_{\bar{f}, \hat{f}}\left(\sum_{k=1}^n \int_{X_k}{f_k(x_k) d\mu_k}: \hspace{5pt} \bar{f} \leq \bar{c}+\hat{f}\right)=\\
   =\sup_{\bar{f}}\left(\sum_{k=1}^n \int_{X_k}{f_k(x_k) d\mu_k}: \hspace{5pt} \bar{f} \leq \bar{c}\right)
   \end{multline*}
  \end{proof}

  Note that in the case of invariant cost function: $c(x)=c(g(x))$ for any $g \in G$, the invariant dual problem
  coincides with the dual problem for the case without additional restrictions. Indeed, if $c$ is invariant then $\bar{c}=c$ and
  we have that
  $$
     \inf_{\pi \in \Pi_W(\mu)}\left\{\int c d\pi\right\}=
    \sup_{f \leq c}{\left\{\sum_{k=1}^n\int{f_k d\mu_k}\right\}}
  $$
  It is known (see, for example, Theorem 2.1.1 from \cite{Rachev1}) that maximizers for the dual problem exist if there is no additional restrictions ($W={0}$).
  Consequently these maximizers are invariant and also appears to be maximizers for the dual invariant problem with the restriction of invariance.

  \begin{corollary}
  If cost function $c$ and all marginal measures $\mu_k$ are invariant with respect to the action of $G$ then
  $$
    \inf_{\pi \in \Pi_W(\mu)}\left\{\int c d\pi\right\}=
    \sup_{f \leq c}{\left(\sum_{k=1}^n\int{f_k d\mu_k}\right)}=
    \inf_{\pi \in \Pi(\mu)}\left\{\int c d\pi\right\}
  $$
  and solution of the invariant dual problem coincides with the solution of the usual Monge-Kantorovich dual problem.
  \end{corollary}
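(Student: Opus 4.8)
The plan is to treat the corollary as the formal packaging of the remark that precedes it: the chain of equalities will come from the invariant Kantorovich duality theorem (just proved, for compact $G$) combined with the classical unconstrained duality, and the statement about maximizers will follow from the known existence of classical dual optimizers. The only place where invariance of the cost is genuinely used is a one-line computation, which I would record first. Since $c\circ g=c$ for every $g\in G$ and $\chi$ is a probability measure on the compact group $G$,
$$
\bar{c}(x)=\int_G (c\circ g)(x)\,d\chi(g)=\int_G c(x)\,d\chi(g)=c(x),
$$
so $\operatorname{Pr}_{\overline{W}_1}(c)=c-\bar{c}=0$ and $c=\bar{c}\in V$.

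With $\bar{c}=c$ in hand, the first equality is immediate: the invariant Kantorovich duality theorem proved just above now reads $\inf_{\pi\in\Pi_W}\int c\,d\pi=\sup_{f\leq c}\sum_{k}\int f_k\,d\mu_k$ with $f\in\bigoplus_k C_b(X_k)$. For the second equality I would apply Theorem \ref{bounded Kantorivich duality} in the trivial case $W=\{0\}$, i.e. the classical unconstrained duality, which gives $\inf_{\pi\in\Pi(\mu)}\int c\,d\pi=\sup_{f\leq c}\sum_{k}\int f_k\,d\mu_k$ over the same class $f\in\bigoplus_k C_b(X_k)$. The two right-hand sides are literally the same optimization, so chaining through this common middle term yields the threefold equality of the statement.

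For the assertion about solutions I would argue as follows. By \cite{Rachev1}, Theorem 2.1.1, the unconstrained dual problem attains its value: there is $f^*=\sum_k f^*_k$ with $f^*\leq c$ and $\sum_k\int f^*_k\,d\mu_k=\inf_{\pi\in\Pi(\mu)}\int c\,d\pi$. The feasible set $\{f:f\leq c\}$ of the usual dual embeds into the feasible set $\{(f,\omega):f+\omega\leq c,\ \omega\in W\}$ of the invariant dual via $f\mapsto(f,0)$, since $0\in W$; as the two problems share the same value, $(f^*,0)$ is a maximizer of the invariant dual as well, so the maximizers coincide. If one wants an explicitly invariant maximizer, I would Haar-average, setting $\overline{f^*_k}(x_k)=\int_G f^*_k(g_k(x_k))\,d\chi(g)$: acting by $g$ on $\sum_k f^*_k\leq c$, using $c\circ g=c$, and integrating over $G$ shows $\sum_k\overline{f^*_k}\leq c$ (feasibility), while invariance of each $\mu_k$ gives $\int\overline{f^*_k}\,d\mu_k=\int f^*_k\,d\mu_k$ (optimality preserved).

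The argument is essentially bookkeeping on top of results already established, so I expect no serious obstacle; the points that genuinely require care are that $\bar{c}=c$ relies on $\chi$ being a probability measure, hence on compactness of $G$, and that the Haar-averaging step uses invariance of $c$ for feasibility and invariance of the marginals $\mu_k$ for optimality — precisely the two hypotheses of the corollary. Existence of the classical dual maximizer is what makes the phrase ``the solution coincides'' meaningful rather than vacuous, so I would flag the use of \cite{Rachev1} as the one external input beyond the duality theorems proved above.
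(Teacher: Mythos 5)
Your proposal is correct and takes essentially the same route as the paper: observe that invariance of $c$ gives $\bar{c}=c$, apply the just-proved invariant duality theorem for compact $G$, chain with the classical unconstrained duality, and invoke Theorem 2.1.1 of \cite{Rachev1} for attainment in the classical dual. Your Haar-averaging step is a welcome refinement, since the paper's phrase ``consequently these maximizers are invariant'' is loose (a classical dual maximizer need not itself be invariant, but its average over $\chi$ is, and it remains feasible and optimal exactly by the two invariance hypotheses you identify).
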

  The same result was obtained in \cite{Moameni} with the use of a different argumentation.

\section*{Appendix}
\textbf{Proof of the Kantorovich duality theorem for the case without additional constraints.}

We are going to prove the next statement
\begin{theorem*}
Let $X_1,...,X_n$, $X=X_1\times \dots \times X_n$ be Polish spaces,
$\mu=\left(\mu_k \in \PP(X_k)\right)$ for $k=1,...,n$, $c \in C_L(\mu)$ Then
$$
\inf_{\pi\in \Pi}{\int c d\pi}=\sup_{f\leq c}{\sum_{k=1}^n \int_{X_k}{f_k(x_k) d\mu_k}}.
$$
where $f \in F=\bigoplus_{i=1}^n C_L(\mu_i)$.
\end{theorem*}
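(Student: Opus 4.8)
The plan is to derive the nontrivial inequality $\inf_{\pi\in\Pi}\int c\,d\pi\le\sup_{f\le c}\sum_{k}\int f_k\,d\mu_k$ by reducing the $C_L(\mu)$ cost to the classical duality for \emph{bounded} continuous costs, relying on the same truncation estimates used in the density lemma above. Write $I$ and $S$ for the left- and right-hand sides. The inequality $I\ge S$ is immediate: for admissible $f=\sum_k f_k\le c$ and any $\pi\in\Pi(\mu)$ the marginal condition gives $\int f\,d\pi=\sum_k\int f_k\,d\mu_k$, while the remark above makes $c$ and $f$ integrable against $\pi$, so $f\le c$ yields $\sum_k\int f_k\,d\mu_k\le\int c\,d\pi$; taking the supremum over $f$ and the infimum over $\pi$ gives $S\le I$.

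First I would reduce to a \emph{nonnegative} cost. Since $c\in C_L(\mu)$, there is a dominating $f=\sum_k f_k\in F$ with $|c|\le f$; setting $\phi_k:=|f_k|\in C_L(\mu_k)$ we get $\phi_k\ge 0$ and $\phi:=\sum_k\phi_k\ge|f|\ge|c|$. Because $\int\phi\,d\pi=\sum_k\int\phi_k\,d\mu_k$ is one and the same constant for every $\pi\in\Pi(\mu)$, and $f\le c\iff f+\phi\le c+\phi$ with $f+\phi\in F$, replacing $c$ by $\tilde c:=c+\phi$ shifts both $I$ and $S$ by this common constant. Hence it suffices to treat $\tilde c$, which satisfies $0\le\tilde c\le 2\phi=\sum_k\psi_k$ with $\psi_k:=2\phi_k\ge 0$ in $C_L(\mu_k)$; so I may assume $0\le c\le\sum_k\psi_k$.

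Now for $N\in\N$ put $c_N:=\min(c,N)\in C_b(X)$, so that $0\le c_N\le c$ and $c_N\uparrow c$. The classical multimarginal Kantorovich duality for bounded continuous costs (see \cite{kel1}, \cite{Vill1}) applies to $c_N$ and gives $I_N:=\inf_{\pi\in\Pi}\int c_N\,d\pi=\sup_{f\in F_b,\,f\le c_N}\sum_k\int f_k\,d\mu_k=:S_N$, where $F_b=\bigoplus_k C_b(X_k)$. On the primal side, $0\le c-c_N=(c-N)_+\le\bigl(\sum_k\psi_k-N\bigr)_+\le\sum_k(\psi_k-\tfrac{N}{n})_+$ by subadditivity of $(\cdot)_+$, whence $\sup_{\pi\in\Pi}\int(c-c_N)\,d\pi\le\sum_k\int(\psi_k-\tfrac{N}{n})_+\,d\mu_k\to 0$ by dominated convergence, exactly as in the density lemma; this bound is uniform in $\pi$, so $I_N\to I$. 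On the dual side, $c_N\le c$ and $F_b\subset F$ give $\{f\in F_b:f\le c_N\}\subseteq\{f\in F:f\le c\}$, hence $S_N\le S$ for every $N$. Combining, $I=\lim_N I_N=\lim_N S_N\le S$, which together with $S\le I$ completes the proof.

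The step I expect to be the crux is the uniform truncation estimate $\sup_{\pi}\int(c-c_N)\,d\pi\to 0$: mere pointwise or monotone convergence of $\int c_N\,d\pi$ for fixed $\pi$ would not let me interchange the limit with the infimum over $\Pi(\mu)$, and it is precisely the single-variable domination built into the definition of $C_L(\mu)$ that renders the error bound uniform. Note also that the clean half-line truncation $c_N=\min(c,N)$, rather than a two-sided one, is what preserves the inclusion $\{f\le c_N\}\subseteq\{f\le c\}$ and hence the decisive inequality $S_N\le S$. If one instead wants a proof that does not quote the bounded case, the same reduction and truncation feed a Hahn--Banach / Fenchel--Rockafellar separation argument on $C_b(X)$, but this uniform estimate remains the essential analytic ingredient.
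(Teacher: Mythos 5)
Your proof is correct, but it takes a genuinely different route from the paper's. The paper's Appendix argument is self-contained functional analysis: it defines the marginal functional $T(f)=\sum_i\int f_i\,d\mu_i$ on $F$, dominates it by the sublinear functional $U(h)=\inf\{T(f):f\ge h\}$, uses Hahn--Banach to produce positive extensions $P\le U$ (one of which attains $P_c(-c)=U(-c)$), and then spends most of its effort identifying every such extension with integration against an actual transport plan --- via the Riesz representation on the Stone--\v{C}ech compactification $\beta X$, a marginal computation showing the resulting measure is concentrated on $X$, and density of $C_b$ in $(C_L,\|\cdot\|_D)$. You instead treat the classical multimarginal duality for \emph{bounded} continuous costs (Kellerer \cite{kel1}; this is the same result the paper itself invokes as known for its Theorem \ref{bounded Kantorivich duality}) as a black box and reduce the $C_L$ case to it: shift $c$ by the single-variable dominator $\phi=\sum_k|f_k|$ so that both primal and dual values move by the same constant $\sum_k\int\phi_k\,d\mu_k$, truncate $c_N=\min(c,N)$, and pass to the limit. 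Your crux estimate, $\sup_{\pi\in\Pi}\int(c-c_N)\,d\pi\le\sum_k\int\bigl(\psi_k-\tfrac{N}{n}\bigr)_+\,d\mu_k\to 0$, is precisely the uniform bound the paper uses to prove density of $C_b$ in $C_L$ (its Lemma 1.3), and your one-sided truncation correctly preserves the dual inclusion $\{f\in F_b:f\le c_N\}\subseteq\{f\in F:f\le c\}$, so $S_N\le S$ while $I_N\to I$ and $I_N=S_N$ close the gap; the initial reduction is also sound, since $f\mapsto f+\phi$ is a bijection of $F$ matching the constraint sets. What each approach buys: yours is shorter, avoids the Hahn--Banach/compactification machinery, and isolates the single analytic ingredient (the $C_L$ domination makes the truncation error uniform over $\Pi(\mu)$), but at the price of quoting the bounded-cost theorem, so it is no longer self-contained --- which was the stated purpose of the paper's Appendix; the paper's argument presupposes no duality theorem at all and in effect reproves the bounded case along the way, at the price of the delicate step of showing that abstract positive extensions are genuine transport plans.
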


\begin{proof}
 Let $T:F\rightarrow \R$ be a linear functional defined by the formula
 $$
 T(f)=\sum_{i=1}^n \int f_i d\mu_i
 $$
 It is positive and continuous with respect to $\|\cdot\|_L$ - seminorm.
 Let 
 $$
 U(h)=\inf \left\{T(f) : f \geq h\right\}
 $$ be a functional from $C_L(\mu)$ to $\R$.
 It can be proved, that $U$ is subadditive:
 \begin{multline*}
 U(h+g)=\inf_{f \in F} \left\{T(f) : f \geq (h+g)\right\}\leq \\
 \leq \inf \left\{T(f) : f \geq h\right\}+\inf \left\{T(f) : f \geq g\right\}=U(h)+U(g)
 \end{multline*}
 since for any $f_1>h$, $f_2>g$ it is true that $f_1+f_2>h+g$.
 Also $U$ is positively homogeneous: for any $\alpha \in \R^+$
 \begin{multline*}
 U(\alpha h)=\inf_{f \in F} \left\{T(f) : f \geq (\alpha h)\right\}=\\
 = \inf_{f \in F} \left\{T(\alpha f) : f \geq h\right\} = \alpha \inf_{f \in F} \left\{T(f) : f \geq h\right\}=\alpha U(h)
 \end{multline*}
 Thus $U$ is sublinear and is ready to be used in the Hahn-Banach theorem.

 Additionally we will need the following property of $U$: for any $t \in \R$
 $$
 U(t\cdot h)\geq t\cdot U(h)
 $$
 Indeed
 \begin{multline*}
 U(-h)=\inf_{f \in F} \left\{T(f) : f \geq -h\right\}=\inf_{f \in F} \left\{T(f) : -f \leq h\right\}=\\
 =\inf_{f \in F} \left\{T(-f) : f \leq h\right\}=-\sup_{f \in F} \left\{T(f) : f \leq h\right\}\geq\\
 \geq -\inf_{f \in F} \left\{T(f) : f \geq h\right\}= -U(h)
 \end{multline*}
 and combining this result with positive homogeneity we obtain the desired inequality.
 The last inequality $-\sup_{f \in F} \{T(f): f \leq h\} \geq -\inf_{f \in F} \{T(f): f \geq h\}$ follows from the 
 positivity of the functional $T$. Since it is positive,
 it saves order, hence all elements from the image of $\{f: f \leq h\}$ under the map $T$ is not greater then 
 any element from $\{f: f \geq h\}$ under the same map.
 Thus $\sup_{f \in F} \{T(f): f \leq h\} \leq \inf_{f \in F} \{T(f): f \geq h\}$, which is what we want modulo multiplication by $-1$.

 Using the fact that $T\leq U$ on $F$ we can apply Hahn-Banach extension theorem to extend $T$ from $F$
 to the whole space $C_L(\mu)$. Denote such extension as $P$ and prove that the property $P\leq U$ leads to positivity of $P$.

 Assume $P$ is not positive functional. Hence there exists a function $h \in C_L$ such that $h\geq 0$ and $P(h)<0$. The following argument
 $$
 0 < P(-h) \leq U(-h)=\inf \left\{T(f) : f \geq -h\right\}\leq 0
 $$
 leads us to the contradiction.

 Let us define a new linear operator $T_c: \{ f + t c: t\in \R, f\in F\} \rightarrow \R$ such that it coincides with $T$ on $F$: $T_c|_F=T$ and
 coincides with $U$ at the point $-c$: $T_c(-c)=U(-c)$. By linearity of $T_c$ and properties of $U$ it follows 
 that $T_c(t\cdot c) = t\cdot U(c) \leq U(t\cdot c)$.
 Thus $T_c \leq U$ everywhere on its domain and using Hahn-Banach theorem we can extend $T_c$ to the linear functional
 $P_c: C_L(\mu) \rightarrow \R$ such that $P_c|_F=T_c$, $P_c(-c)=U(-c)$, $P_c \leq U$.

 By the construction of linear extensions we have:
 $$
 \sup_{P}{P(-c)}\leq \inf_{f \in F} \left\{T(f) : f \geq -c\right\}
 $$
 where supremum (and infimum) is taken by all possible linear extensions satisfying conditions above (extends $T$ and dominated by $U$).
 Multiplying by $-1$ and using linearity of $T$ and $P$ one can obtain
 $$
 \inf_{P}{P(c)}\geq \sup_{f \in F} \left\{T(f) : f \leq c\right\}
 $$
 Analogously from the equality $P_c(-c)=U(-c)$ using multiplication by $-1$ and linearity of $T$ and $P$ we have
 $$
 P_c(c)= \sup_{f \in F} \left\{T(f) : f \leq c\right\}
 $$
 Since $P_c$ extends $T$ and dominated by $U$
 $$
 \inf_{P}{P(c)}= \sup_{f \in F} \left\{T(f) : f \leq c\right\}
 $$
 This equality differs from the desired duality statement by the fact that infimum is taken over the family of linear operators,
 which are not measures a priori. Therefore in the rest part of the proof we will show that actually these functionals are transport plans.

 Define for any $P$ its restriction $l:=P|_{C_b} \in (C_b(X))^*$ on the dual space for the space of bounded continuous functions on $X$. 
 According to the appropriate version of the Rietz representation theorem $(C_b(X))^*\simeq(C(\beta X))^* \simeq \mathcal{M}(\beta X)$, 
 where $\beta X$ is the Stone-Cech compactification of $X$ and $\mathcal{M}(\beta X)$ is the set of signed measures on it. 
 Both isomorphisms preserve respective norms and positive cones (see e.g. \cite{Bogachev} Th. 7.10.4, 7.10.5). Since $l$ is positive and $\langle l, 1\rangle=1$ 
 the associated measure $\pi$ is a probabilistic.
 
 It is also known that for any $f_i\in C_L(\mu_i)$ $\int f_i d\pi= \langle l, f_i \rangle= \int f_i d\mu_i$. 
 Since $C_L(\mu_i)$ is dense in $L_1(X_i,\mu_i)$ the equality holds true for all integrable (w.r.t. $\mu_i$) functions, 
 in particular for indicator functions of measurable sets on $X_i$.
 
 Let $\pi|_X$ be defined as a measure on $X$ by the formula: $\pi|_X(A)=\pi(A\cap X)$ for all $A\in \beta X$ measurable w.r.t $\pi$. 
 We want to prove that $\pi|_X$ is a probability measure, since it implies that $X$ is a set of full $\pi$-measure. 
 Obviously the total variation of $\pi|_X$ is not greater than one, hence we only need to prove that it is actually not less. 
 Recall that $X$ has a structure of topological direct product: $X=X_1\times\dots\times X_n$, 
 hence there is a well-defined projection $\operatorname{Pr}_i:X\rightarrow X_i$, 
 which pushes forward measure $\pi|_X$ to some measure on $X_i$. 
 Actually the pushforward measure is exactly $\mu_i$: for any $A_i$ measurable w.r.t $\mu_i$
 $$
 ((\operatorname{Pr}_i)_{\#}\pi|_X)(A_i)=\pi|_X(\operatorname{Pr}_i^{-1}(A_i))=
 \pi(\operatorname{Pr}_i^{-1}(A_i))=\int \operatorname{Ind}(A_i) d\mu_i=\mu_i(A_i)
 $$
 In particular $\pi|_X (X)=\pi|_X(\operatorname{Pr}_i^{-1}(X_i))=\mu (X_i)=1$.
 
 Thus we obtained that $l=P|_{C_b}\simeq\pi$ is actually a probability measure on $X$ with marginals $\mu_i$. 
Our next goal is to show that $P$ itself is also a measure.

We are going to use the following seminorm on the space $C_L(\mu)$
 $$
 \| h\|_D=\inf_{f \in F} \left\{T(f) : f \geq |h|\right\}
 $$
 It can be directly checked that this seminorm is actually well-defined (see \cite{Rachev1} for details).
 The associated topology is stronger than $\|\cdot \|_L$ topology:
 $$
 \inf_{f \in F} \left\{T(f) : f \geq |h|\right\}\geq \sup_{\pi \in \Pi}\int |h|d\pi=\| h\|_L
 $$
 but on subspace $C_b \subset C_L$ it is weaker than the topology of uniform convergence:
 $$
 \inf_{f \in F} \left\{T(f) : f \geq |h|\right\}\geq \inf_{f \in F} \left\{T(f) : f \geq \sup_{x \in X}|h(x)|\right\}=
 T(\sup_{x \in X}|h(x)|)=\sup_{x \in X}|h(x)|
 $$
 Note that $P$ is continuous with respect to $\| h\|_D$:
 $$P(|h|) \leq U(|h|)=\| h\|_D$$
 and the restriction of $P$ on $C_b(X)$: $P|_{C_b}=l$ is also continuous.

 The important fact is that $C_b$ is dense in $C_L$ with respect to the seminorm $\| \cdot\|_D$.
 Pick $g\in C_L$, and let $|g|\leq f\in F=\bigoplus_{i=1}^n C_L(\mu_i)$. 
 Let $k \in \N$, $g^k=\min\{k,g\}$ and $g^k_k=\max\{\min\{k,g\},-k\}\in C_b(X)$. Note that $|g^k|\leq f$.
 Show that $\|g-g^k_k\|_D \rightarrow 0$ as $k \rightarrow \infty$:
 $$
 \|g-g^k_k\|_D\leq\|g-g^k\|_D+\|g^k-g^k_k\|_D
 $$
 The fact $\pm g-k \leq f-k = \sum_{i=1}^n\left(f_i-\frac{k}{n}\right)$ together with positivity of the maps $(\cdot)_+:=\max\{\cdot,0\}$
 and $U$ implies
 $$
 \|g-g^k\|_D=U((g-k)_+)\leq U\left(\sum_{i=1}^n\left(f_i-\frac{k}{n}\right)_+\right)\rightarrow 0, \mbox{ as } k \rightarrow \infty
 $$
 $$
 \|g^k-g^k_k\|_D=U((-g^k-k)_+)\leq U\left(\sum_{i=1}^n\left(f_i-\frac{k}{n}\right)_+\right)\rightarrow 0, \mbox{ as } k \rightarrow \infty
 $$
 Convergence here is due to the Lebesgue dominated convergence theorem.
 
Note that $l:=P|_{C_b}$ acts on each function from $C_L(\mu)$ via integration and thus can be seen as a linear operator on $C_L$.
Since integration is continuous in $\|\cdot\|_D$ topology and $\|\cdot\|_D$ is stronger than $\|\cdot\|_L$, we conclude that both extended $l$ and $P$
are continuous linear functionals on $(C_L(\mu),\|\cdot\|_D)$ and coincide on $C_b$.
Using the fact that $C_b$ is dense in $(C_L, \|\cdot\|_D)$ we obtain that extended $l$ and $P$ also coincide on the whole $C_L$
$$
P\simeq\pi \in \Pi(\mu)
$$
Note, that actually $P$ is not just a measure, it is also a transport plan with marginals $\mu_i$. Thus the desired statement is proved.
\end{proof}

\section*{Acknowledgments}
I would like to thank my scientific advisor Alexander Kolesnikov for his valuable ideas, which he shared with me, and
those long discussions that helped me to complete this work.

\end{document}